\newtheorem{thm}{Theorem}[section]
\newtheorem{prop}[thm]{Proposition}
\newtheorem{cor}[thm]{Corollary}
\theoremstyle{definition}
\newtheorem{defn}[thm]{Definition}
\newtheorem{rem}[thm]{Remark}
\newtheorem{quest}[thm]{Question}
\newcommand{\boundary}{\partial}
\newcommand{\bigpresentation}[2]{ \bigl\langle \, {#1} \bigm| {#2} \,
                                  \bigr\rangle }
\newcommand{\bigset}[2]{ \bigl\{ \, {#1} \bigm| {#2} \, \bigr\} }
\renewcommand{\setminus}{-}
\newcommand{\field}[1]{\mathbb{#1}}
\newcommand{\Z}{\field{Z}}
\newcommand{\R}{\field{R}}
\newcommand{\Hyp}{\field{H}}
\newcommand{\sP}{\mathcal{P}}
\newcommand{\sE}{\mathcal{E}}
\newcommand{\of}{\circ}
\renewcommand{\hat}{\widehat}
\DeclareMathOperator{\CAT}{CAT}
\DeclareMathOperator{\interior}{int}
\newcommand{\Swiatkowski}{{\'{S}}wi{\k{a}}tkowski}
\newcommand{\Sierpinski}{Sierpi{\'n}ski}
\newcommand{\showcomments}{yes}
\newsavebox{\commentbox}
\begin{document}

\title{Nonhyperbolic Coxeter groups with Menger boundary}

\author{Matthew Haulmark}
\address{Department of Mathematics\\
1326 Stevenson Center\\
Vanderbilt University\\
Nashville, TN 37240, USA}
\email{m.haulmark@vanderbilt.edu}

\author[G.C.~Hruska]{G.~Christopher Hruska}
\address{Department of Mathematical Sciences\\
         University of Wisconsin--Milwaukee\\
         PO Box 413\\
         Milwaukee, WI 53211\\
	 USA}
\email{chruska@uwm.edu}

\author{Bakul Sathaye}
\address{Department of Mathematics and Statistics\\
Wake Forest University\\
127 Manchester Hall\\
PO Box 7388\\
Winston-Salem, NC 27109, USA}
\email{sathaybv@wfu.edu}

\begin{abstract}
A generic finite presentation defines a word hyperbolic group whose boundary is homeomorphic to the Menger curve.  In this article we produce the first known examples of non-hyperbolic $\CAT(0)$ groups whose visual boundary is homeomorphic to the Menger curve.
The examples in question are the Coxeter groups whose nerve is a complete graph on $n$ vertices for $n\geq 5$.
The construction depends on a slight extension of \Sierpinski's theorem on embedding $1$--dimensional planar compacta into the \Sierpinski\ carpet.
See the appendix for a brief erratum.
\end{abstract}

\keywords{Nonpositive curvature, Menger curve, Coxeter group}

\subjclass[2010]{%
20F67, 
20E08} 

\date{\today}

\maketitle

\section{Introduction}
\label{sec:Introduction}

Many word hyperbolic groups have Gromov boundary homeomorphic to the Menger curve.  Indeed random groups have Menger boundary with overwhelming probability \cite{Champetier95,DahmaniGuirardelPrzytycki11}.  Therefore, in a strong sense, Menger boundaries are ubiquitous among hyperbolic groups.
This phenomenon depends heavily on the fact that the boundary of a one-ended hyperbolic group is always locally connected, a necessary condition since the Menger curve is a locally connected compactum.

However in the broader setting of $\CAT(0)$ groups, the visual boundary often fails to be locally connected, especially in the case when the boundary is one-dimensional.
For instance the direct product $F_2 \times \Z$ of a free group with the integers has boundary homeomorphic to the suspension of the Cantor set, which is one-dimensional but not locally connected.
The $\CAT(0)$ groups with isolated flats are, in many ways, similar to hyperbolic groups, and are often viewed as the simplest nontrivial generalization of hyperbolicity.
However, even in that setting many visual boundaries are not locally connected.  For example if $X$ is formed by gluing a closed hyperbolic surface to a torus along a simple closed geodesic loop, then its fundamental group $G$ is a $\CAT(0)$ group with isolated flats that has non--locally connected boundary \cite{MihalikRuane99}.

One might wonder whether the Menger curve boundary is a unique feature of the hyperbolic setting.
Indeed, recently Kim Ruane observed that not a single example was known of a nonhyperbolic $\CAT(0)$ group with a visual boundary homeomorphic to the Menger curve, posing the following question.

\begin{quest}[Ruane]
\label{quest:Menger}
Does there exist a nonhyperbolic group $G$ acting properly, cocompactly, and isometrically on a $\CAT(0)$ space $X$ such that the visual boundary of $X$ is homeomorphic to the Menger curve?
\end{quest}

In this article we provide the first explicit examples of nonhyperbolic $\CAT(0)$ groups with Menger visual boundary.  

\begin{thm}
\label{thm:Main}
Let $W$ be the Coxeter group defined by a presentation with $n$ generators of order two such that the order $m_{st}$ of $st$ satisfies $3\le m_{st} < \infty$ for all generators $s\ne t$
\textup{(}or more generally let $W$ be any Coxeter group whose nerve is $1$--dimensional and equal to the complete graph $K_n$\textup{)}.
\begin{enumerate}
   \item If $n=3$ the group $W$ has visual boundary homeomorphic to the circle and acts as a reflection group on the Euclidean or hyperbolic plane.
   \item If $n=4$ the group $W$ has visual boundary homeomorphic to the \Sierpinski\ carpet and acts as a reflection group on a convex subset of $\Hyp^3$ with fundamental chamber a \textup{(}possibly ideal\textup{)} convex polytope.
   \item For each $n\ge 5$, the group $W$ has visual boundary homeomorphic to the Menger curve.
\end{enumerate}
\end{thm}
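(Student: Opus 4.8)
The plan is to recognize the visual boundary $Z:=\partial_\infty\Sigma_W$ of the Davis complex by means of the classical topological characterizations of the circle, of the \Sierpinski\ carpet (Whyburn), and of the Menger curve (Anderson), arguing by induction on $n$. Part (1) is essentially classical: a rank--$3$ Coxeter group with all $m_{st}\ge 3$ has $\sum_{s\ne t}1/m_{st}\le 1$, so it is affine (the group $\tilde A_2$, when all $m_{st}=3$) or hyperbolic, and in either case acts properly, cocompactly, and by reflections on $\E^2$ or $\Hyp^2$; taking $X$ to be that plane gives boundary $S^1$, and one checks directly that $\partial_\infty\Sigma_W\cong S^1$ as well. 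Assume from now on that $n\ge 4$. Then the nerve of $W$ is the $1$--complex $K_n$, since no triple of generators generates a finite subgroup (again $\sum 1/m\le 1$), so $\Sigma_W$ is a $2$--dimensional proper $\CAT(0)$ space carrying a geometric $W$--action. It follows that $Z$ is compact and metrizable, that $\dim Z=1$ (the upper bound from the $1$--dimensionality of the nerve, the lower bound because $Z$ is a nondegenerate continuum), and that $Z$ is connected because $W$ is one--ended ($K_n$, $n\ge 3$, being connected with no separating simplex).

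The core of the argument is to show that $Z$ is a Peano continuum with no local cut points. By Moussong's criterion the only irreducible affine special subgroups of $W$ are rank--$3$ (the $\tilde A_2$ subgroups arising from $(3,3,3)$--labelled sub-triangles of $K_n$), and any two of them meet in a finite subgroup; hence $W$ has isolated flats, all of them $2$--flats, equivalently $W$ is hyperbolic relative to the family $\sP$ of these affine special subgroups (with $\sP=\emptyset$, i.e.\ $W$ word hyperbolic, when no sub-triangle is $(3,3,3)$). By Hruska--Kleiner, $Z$ is independent of the $\CAT(0)$ model, and the natural surjection $Z\to\partial(W,\sP)$ onto the Bowditch boundary collapses the boundary circle of each $2$--flat to a parabolic point. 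Since $K_n$ ($n\ge 3$) has no separating simplex and $W$ has no $\infty$--labels, $W$ splits neither over a finite nor over a parabolic subgroup, so $\partial(W,\sP)$ is connected with no global cut point and is therefore locally connected; as restoring each parabolic point to a circle preserves local connectedness, $Z$ is a Peano continuum. For $n\ge 4$ the same considerations show that $W$ admits no splitting over a two--ended or parabolic subgroup and that $Z$ is not a circle (it contains the disjoint circles $\partial_\infty\Sigma_{W_{\{a,b,c\}}}$ and $\partial_\infty\Sigma_{W_{\{a,b,d\}}}$, special subcomplexes of $\Sigma_W$ being convex and these two triangle subgroups meeting in a finite subgroup); by the JSJ/local--cut--point theory of (relatively) hyperbolic groups, $Z$ then has no local cut points.

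Now the cases diverge. For $n=4$, the Gram matrix of the angles $\pi/m_{st}$ has signature $(3,1)$, so $W$ acts on $\Hyp^3$ as a reflection group with fundamental chamber the convex region cut out by four hyperplanes meeting at the dihedral angles $\pi/m_{st}$ — a finite--volume tetrahedron with ideal vertices when all sub-triangles are $(3,3,3)$, and otherwise an infinite--volume region, the vertices being ideal or hyperideal precisely because no sub-triangle is spherical (a $3\times 3$ Gram-submatrix computation). This action has a finite--sided fundamental polyhedron, hence is geometrically finite with cusps exactly the rank--$3$ affine subgroups, so $\partial(W,\sP)$ is the limit set $\Lambda\subseteq S^2=\partial\Hyp^3$, a $1$--dimensional planar compactum; consequently $Z$ is $\Lambda$ with a null sequence of parabolic points restored to disjoint circles. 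The crucial step is that $Z$ is planar: one embeds $Z$ into the \Sierpinski\ carpet using a strengthening of \Sierpinski's embedding theorem that controls the placement of the restored circles relative to the peripheral circles of the carpet (this strengthening is established separately, and the appendix records an erratum to its original proof). Granting that, $Z$ is a planar $1$--dimensional Peano continuum without local cut points, so Whyburn's characterization gives $Z\cong$ \Sierpinski\ carpet.

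For $n\ge 5$ I instead show $Z$ is nowhere planar. By the inductive hypothesis $Z$ contains, for every $4$--element $T\subseteq S$, a copy of the carpet $\partial_\infty\Sigma_{W_T}$, and for every $3$--element $T$ a circle, meeting along the sub-boundaries $\partial_\infty\Sigma_{W_{T\cap T'}}$ according to the incidence pattern of $K_n$; when $n=5$ the five carpets $\partial_\infty\Sigma_{W_{S\setminus\{s_i\}}}$ meet pairwise in the ten circles $\partial_\infty\Sigma_{W_{S\setminus\{s_i,s_j\}}}$, so picking a point interior to each carpet together with ten pairwise disjoint arcs realizing these incidences exhibits a subspace of $Z$ homeomorphic to a subdivision of $K_5$ (for $n\ge 6$, apply this to a $5$--element subset). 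Thus $Z$ is not planar; and since $W$ acts cocompactly on $\Sigma_W$ the space $Z$ is self--similar in the sense that every compact proper subset has $W$--translates lying inside any prescribed nonempty open set, so every nonempty open subset of $Z$ contains a subdivided $K_5$ and hence fails to embed in $S^2$. With $Z$ established to be a $1$--dimensional Peano continuum with no local cut points that is nowhere planar, Anderson's characterization yields $Z\cong$ Menger curve. I expect the principal obstacle throughout to be the planarity claim for $n=4$ — formulating and correctly applying the strengthened \Sierpinski\ embedding theorem to the blown--up limit set, which the erratum indicates is where the original argument required repair — with secondary difficulties in importing the precise local--connectedness and local--cut--point criteria for these relatively hyperbolic Coxeter groups and in ensuring the self--similarity used for ``nowhere planar'' is invoked where the $\CAT(0)$ boundary dynamics are contracting, e.g.\ at conical limit points, which are dense in $Z$.
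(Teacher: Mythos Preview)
Your overall architecture is reasonable, but you have misplaced the role of the strengthened \Sierpinski\ embedding result, and this creates gaps in both the $n=4$ and the $n\ge 5$ cases.

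For $n=4$, you propose to deduce planarity of $Z$ from planarity of the limit set $\Lambda\subset S^2=\partial\Hyp^3$ by ``embedding $Z$ into the carpet while controlling the restored circles.'' But \Sierpinski's theorem (and the paper's strengthening of it) embeds $1$--dimensional \emph{planar} compacta into the carpet; it cannot be used to \emph{establish} planarity of the blown--up space $Z$, so this step is circular as written. The paper bypasses the issue entirely: since $L=K_4$ is planar, one fills a planar embedding $L\hookrightarrow S^2$ with right--angled cones to obtain a metric flag triangulation $\hat L$ of $S^2$; then $W$ is a special subgroup of $W_{\hat L}$, whose Davis--Moussong complex has visual boundary $S^2$, and convexity of special subcomplexes gives $\partial W\hookrightarrow S^2$ directly. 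No embedding theorem is needed for planarity.

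For $n\ge 5$, the paper invokes Haulmark's trichotomy for $\CAT(0)$ groups with isolated flats, $1$--dimensional boundary, and no splitting over a virtually cyclic subgroup: $\partial W$ must be $S^1$, the carpet, or the Menger curve. This already packages the Peano, no--local--cut--point, and nowhere--planar verifications you attempt by hand (in particular your self--similarity argument at conical points would need further justification in the $\CAT(0)$ setting). So one only needs a \emph{single} embedded $K_5$ to rule out planarity. You write ``picking a point interior to each carpet together with ten pairwise disjoint arcs realizing these incidences,'' but producing such arcs is exactly the nontrivial step: in each carpet $\Lambda W_4^i$ one must embed a $4$--pointed star meeting the four peripheral circles $\Lambda W_3^{i,j}$ at prescribed points $p_{i,j}=p_{j,i}$ and avoiding all other peripheral circles, so that the five stars glue to an embedded $K_5$. \emph{That} is where Proposition~\ref{prop:StarInSierpinski} is applied---and what the erratum repairs---not in the $n=4$ planarity argument.
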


The nerve of a Coxeter system is defined in Definition~\ref{defn:Nerve}.
The nerve is $1$--dimensional when all three-generator special subgroups are finite (see Remark~\ref{rem:1DNerve}).
We note that a $1$--dimensional nerve $L$ is a complete graph if and only if every $m_{st}$ is finite.

The proof of this theorem depends on work of Hruska--Ruane determining which $\CAT(0)$ groups with isolated flats have locally connected visual boundary \cite{HruskaRuaneLocalCon} and subsequent work of Haulmark on the existence of local cut points in boundaries \cite{HaulmarkCAT0}.
In particular, \cite{HaulmarkCAT0} gives a criterion that ensures the visual boundary of a $\CAT(0)$ group with isolated flats will be either the circle, the \Sierpinski\ carpet, or the Menger curve (extending a theorem of Kapovich--Kleiner from the word hyperbolic setting \cite{KapovichKleiner00}).
The circle occurs only for virtual surface groups.  In order to distinguish between the other two possible boundaries, one needs to determine whether the boundary is planar. In general the nerve of a Coxeter group does not have an obvious natural embedding into the visual boundary. However we show in this article that Coxeter groups with nerve $K_n$ do admit an embedding of $K_5$ in the boundary and hence have non-planar boundary whenever $n\geq 5$.

By Bestvina--Kapovich--Kleiner \cite{BestvinaKapovichKleiner02}, we get the following corollary.

\begin{cor}
Let $W$ be a Coxeter group with at least $5$ generators such that every $m_{st}$ satisfies $3 \le m_{st} < \infty$.
Then $W$ acts properly on a contractible $4$--manifold but does not admit a coarse embedding into any contractible $3$--manifold.
In particular, $W$ is not virtually the fundamental group of any $3$--manifold.
\end{cor}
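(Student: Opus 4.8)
The plan is to read off both halves of the statement from Theorem~\ref{thm:Main}(3) together with the van Kampen embedding obstruction of Bestvina--Kapovich--Kleiner \cite{BestvinaKapovichKleiner02}. Theorem~\ref{thm:Main}(3) identifies the visual boundary $\cone W$ as the Menger curve; the property of $\cone W$ that we actually use --- and which is established in the course of proving that theorem --- is that $\cone W$ contains a topologically embedded copy of the complete graph $K_5$, so that $\cone W$ is a $1$--dimensional compactum admitting no embedding into $S^2$.

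For the positive assertion, the nerve of $W$ is a complete graph $K_n$ on $n\ge 5$ vertices, which is a finite $1$--complex and hence PL embeds in $S^3$. It is a general fact, due to Davis (see \cite{BestvinaKapovichKleiner02} for its use in embedding obstructions), that a Coxeter group whose nerve PL embeds in $S^{d-1}$ acts properly and cocompactly on a contractible $d$--manifold; taking $d=4$ gives a contractible $4$--manifold carrying a proper cocompact $W$--action. Here Theorem~\ref{thm:Main} serves only to single out the Coxeter groups of interest; the reason dimension $4$ is available is precisely that the nerve is $1$--dimensional, so that $S^3$ can host it.

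For the negative assertion I would invoke the main theorem of \cite{BestvinaKapovichKleiner02}: because $\cone W$ contains the nonplanar graph $K_5$, the associated van Kampen obstruction is nonzero in the relevant degree, which rules out any coarse embedding --- in particular, any quasi-isometric embedding --- of $W$ into a contractible $3$--manifold. The ``in particular'' clause then follows by contradiction. If $W$ were virtually the fundamental group of a closed $3$--manifold $M$, replace $W$ by a torsion-free finite-index subgroup $H=\pi_1(M)$; since $H$ is one-ended and torsion-free, the prime decomposition together with geometrization lets us take $M$ closed and aspherical, whence $H$, and so $W$, is quasi-isometric to the contractible $3$--manifold $\widetilde{M}$ --- contradicting the previous sentence.

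I expect the genuine mathematical content to lie entirely in Theorem~\ref{thm:Main}: once the embedding $K_5\hookrightarrow\cone W$ is available, the corollary reduces to quoting \cite{BestvinaKapovichKleiner02}. The points that still require a little care are: checking that the embedded $K_5$, together with the action of $W$ on its Davis complex, meets the controlled-geometry hypotheses under which the obstruction of \cite{BestvinaKapovichKleiner02} applies; the dimensional bookkeeping in the positive direction, namely that the object built from $K_n\hookrightarrow S^3$ is a bona fide contractible $4$--manifold with a proper cocompact $W$--action rather than merely a homology manifold or a manifold of the wrong dimension; and the routine but slightly fiddly reduction, in the last clause, from ``virtually a $3$--manifold group'' down to ``coarsely embeds in a contractible $3$--manifold''.
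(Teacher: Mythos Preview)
Your proposal is correct and matches the paper's approach: the paper gives no proof of this corollary at all beyond the single sentence ``By Bestvina--Kapovich--Kleiner \cite{BestvinaKapovichKleiner02}, we get the following corollary,'' and you have correctly identified that the mathematical content lies entirely in Theorem~\ref{thm:Main} (specifically the embedding $K_5\hookrightarrow\boundary W$), with the remainder being a direct application of \cite{BestvinaKapovichKleiner02}. The details you flag as requiring care---verifying the hypotheses of the BKK obstruction, the Davis construction yielding a genuine contractible $4$--manifold, and the reduction in the final clause---are exactly the points absorbed into that citation, and the paper does not spell them out either.
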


\subsection{Related Problems and Open Questions}

A word hyperbolic special case of Theorem~\ref{thm:Main} (when all $m_{st}$ are equal and are strictly greater than $3$) is due to Benakli  \cite{Benakli92_thesis}.
Related results of Bestvina--Mess, Champetier, and Bonk--Kleiner \cite{BestvinaMess91,Champetier95,BonkKleiner05} provide various methods for constructing embedded arcs and graphs in boundaries of hyperbolic groups.

In principle, any of the well-known hyperbolic techniques could be expected to generalize to some families of $\CAT(0)$ spaces with isolated flats, although the details of such extensions would necessarily be more subtle than in the hyperbolic case.  For example, as mentioned above many groups with isolated flats have non--locally connected boundary, and thus are not linearly connected with respect to any metric.

We note that the proof of Theorem~\ref{thm:Main} given here is substantially different from the methods used by Benakli in the hyperbolic setting.
The proof here is quite short and uses a slight extension of \Sierpinski's classical embedding theorem to produce arcs in the boundary.  (We provide a new proof of this embedding theorem.)  Unlike in the hyperbolic setting, these arcs do not arise as boundaries of quasi-isometrically embedded hyperbolic planes.

Nevertheless it seems likely that many of the hyperbolic techniques mentioned above could also be extended to the present setting, which suggests the following natural questions.

\begin{quest}
What conditions on the nerve of a Coxeter group $W$ are sufficient to ensure that the open cone on the nerve $L$ admits a proper, Lipschitz, expanding map into the Davis--Moussong complex of $W$?  When does the nerve $L$ embed in the visual boundary?
\end{quest}

\begin{quest}
Let $G$ be a one-ended $\CAT(0)$ group with isolated flats.
Does the visual boundary of $G$ have the doubling property?  If the boundary is locally connected, is it linearly connected?
Note that the usual visual metrics on Gromov boundaries do not exist in the $\CAT(0)$ setting, so a different metric must be used---such as the metric studied in \cite{OsajdaSwiatkowski15}.
\end{quest}

\begin{quest}
\label{quest:QI}
Let $\mathcal{W}$ be the family of all nonhyperbolic Coxeter groups $W$ with nerve a complete graph $K_n$ where $n \ge 5$ and all labels $m_{st}=3$.
Are all groups in $\mathcal{W}$ quasi-isometric?  Can conformal dimension be used to distinguish quasi-isometry classes of groups in $\mathcal{W}$?  As above, one would need to select an appropriate metric on the boundary in order to make this question more precise.
\end{quest}

It is known that vanishing of the $\ell^2$--Betti number in dimension $i$ is a quasi-isometry invariant for each $i$ \cite{Gromov93,Pansu95_l2QI}.  Mogilski has computed the $\ell^2$--Betti numbers of the groups mentioned in the previous question: for each $W \in \mathcal{W}$ they are nontrivial in dimension two and vanish in all other dimensions \cite[Cor~5.7]{Mogilski16}.  However, this computation does not give any information about the quasi-isometry classification of the family $\mathcal{W}$.  Thus different techniques would be needed to address Question~\ref{quest:QI}.

\subsection{Acknowledgements}
During the preparation of this paper, the authors had many helpful conversations with Craig Guilbault, Mike Mihalik, Wiktor Mogilski, Boris Okun, Kim Ruane, and Kevin Schreve.  We are grateful for the insights obtained during these conversations.  The authors also thank the referees for useful feedback that has led to improvements in the exposition.

This work was partially supported by a grant from the Simons Foundation (\#318815 to G. Christopher Hruska).

\section{Arcs in the \Sierpinski\ carpet}
\label{sec:SierpinskiArcs}

In 1916, \Sierpinski\ proved that every planar compactum of dimension at most one embeds in the \Sierpinski\ carpet \cite{Sierpinski16}.  The main result of this section is Proposition~\ref{prop:StarInSierpinski}---a slight extension of \Sierpinski's theorem---which establishes the existence of embedded graphs in the \Sierpinski\ carpet that connect an arbitrary finite collection of points lying on peripheral circles.

Although \Sierpinski's proof of the embedding theorem was rather elaborate, we present here a simplified proof using the Baire Category Theorem.  The general technique of applying the Baire Category Theorem to function spaces in order to prove embedding theorems is well-known in dimension theory and appears to originate in work of Hurewicz from the 1930s.
The conclusion of Proposition~\ref{prop:StarInSierpinski} may not be surprising to experts, but we have provided the proof for the benefit of the reader.

We begin our discussion with a brief review of Whyburn's topological characterization of planar embeddings of the \Sierpinski\ carpet.

\begin{defn}[Null family of subspaces]
\label{defn:NullFamily} 
Let $M$ be a compact metric space.  A collection $\mathcal{A}$ of subspaces of $M$ is a \emph{null family} if for each $\epsilon >0$ only finitely many members of $\mathcal{A}$ have diameter greater than $\epsilon$.  If $\mathcal{A}$ is a null family of closed, pairwise disjoint subspaces, the quotient map $\pi \colon M \to M/\mathcal{A}$, which collapses each member of $\mathcal{A}$ to a point, is \emph{upper semicontinuous} in the sense that $\pi$ is a closed map (see for example Proposition~I.2.3 of \cite{Daverman86}).
\end{defn}

\begin{rem}[Planar \Sierpinski\ carpets]
\label{rem:PlanarSierpinski}
A \emph{Jordan region} in the sphere $S^2$ is a closed disc bounded by a Jordan curve.  By a theorem of Whyburn \cite{Whyburn58}, a subset $\mathcal{S}\subset S^2$ is homeomorphic to the \Sierpinski\ carpet if and only if it can be expressed as  $\mathcal{S} = S^2 - \bigcup \interior(D_i)$
for some null family of pairwise disjoint Jordan regions $\{D_1,D_2,\ldots\}$ such that $\bigcup D_i$ is dense in $S^2$.
A \emph{peripheral circle} of $\mathcal{S}$ is an embedded circle whose removal does not disconnect $\mathcal{S}$.  Equivalently the peripheral circles in a planar \Sierpinski\ carpet $\mathcal{S} \subset S^2$ are precisely the boundaries of the Jordan regions $D_i$.
We will denote the collection of peripheral circles in $\mathcal{S}$ by $\sP$.
\end{rem}

Let $E_k$ be the $k$--pointed star, i.e., the  cone on a set of $k$ points. Let $e_1,\ldots, e_k$  be the edges of $E_k$, which we will think of as embeddings of $[0,1]$ into $E$ parametrized such that $e_i(0)=e_j(0)$ for all $i,j\in\{1,..., k\}.$

\begin{prop}
\label{prop:StarInSierpinski}
Let $P_1,\ldots, P_k\in \mathcal{P}$ be distinct peripheral circles in the \Sierpinski\ carpet $\mathcal{S}$, and fix points $p_i \in P_i$. There is a topological embedding $h\colon E_k\hookrightarrow\mathcal{S}$ such that $h\circ e_i(1) = p_i$ for each $i\in\{1,\ldots, k\}$.
Furthermore the image of $E_k$ intersects the union of all peripheral circles precisely in the given points $p_1,\dots,p_k$.
\end{prop}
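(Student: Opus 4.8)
The plan is to construct the embedded star $h\colon E_k \hookrightarrow \mathcal{S}$ one edge at a time via the Baire Category Theorem, working in the space of continuous maps $E_k \to \mathcal{S}$ that already satisfy the boundary constraints. Concretely, fix a hub point $q \in \mathcal{S}$ lying off every peripheral circle (such a point exists since $\mathcal{S} - \bigcup \sP$ is nonempty — indeed dense — in $\mathcal{S}$, being the complement of a countable union of circles in a space that is not a union of circles). Let $\Phi$ be the set of continuous maps $f\colon E_k \to \mathcal{S}$ with $f(e_i(0)) = q$ and $f(e_i(1)) = p_i$ for all $i$, equipped with the sup metric; since $\mathcal{S}$ is a complete metric space and the constraint set is closed, $\Phi$ is a complete metric space. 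I would then express ``$f$ is an embedding whose image meets $\bigcup \sP$ only in $\{p_1,\dots,p_k\}$'' as a countable intersection of open dense subsets of $\Phi$.

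The key steps are as follows. First, I must check $\Phi$ is nonempty: each $P_i$ is a circle, hence arcwise connected, so I can route a path in $P_i$ if needed, but more to the point $\mathcal{S}$ is arcwise connected (it is a Peano continuum), so each $p_i$ can be joined to $q$ by a path, giving an element of $\Phi$. Second, for the embedding property I would use the standard dimension-theoretic fact that for maps into a space of dimension $\le 1$ — and $\mathcal{S}$ is $1$-dimensional — the injective maps form a dense $G_\delta$ in the relevant function space; the genuinely ``new'' part is keeping the image away from peripheral circles. For this I use that $\sP$ is a null family: only finitely many peripheral circles have diameter exceeding any given $\epsilon$. So enumerate $\sP = \{P_1, P_2, \dots\}$ and, for each $n$, let $U_n \subset \Phi$ be the set of $f$ whose image avoids $P_n$ except at those of the marked points $p_i$ that happen to lie on $P_n$. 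Openness of $U_n$ is routine (compactness of $E_k$ and closedness of $P_n$). Density is where the work lies: given $f \in \Phi$ and $\epsilon > 0$, I must perturb $f$ by less than $\epsilon$ to push its image off $P_n$ (away from the finitely many marked points on $P_n$). Here I exploit Whyburn's description: near a peripheral circle $P_n = \partial D_n$ the carpet looks like the complement of the open Jordan region $\interior(D_n)$, and infinitely many \emph{other} small Jordan regions $D_j$ cluster near any point of $P_n$ because $\bigcup D_i$ is dense; I use these tiny regions to re-route any sub-arc of $f$ that touches $P_n$ around $P_n$ through the carpet, at small cost since the regions involved are small by the null-family condition. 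One subtlety: I must keep the endpoints $p_i$ on their circles $P_i$ while perturbing, so I restrict the perturbation to the part of $E_k$ mapping near $P_n$ and disjoint from a small neighborhood of each $e_i(1)$ with $P_i \ne P_n$; and for $i$ with $P_i = P_n$ I simply forbid the perturbation from moving a small arc-neighborhood of $e_i(1)$.

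I expect the main obstacle to be the density argument for $U_n$ — specifically, performing the re-routing of a path off the circle $P_n$ in a controlled way while (a) staying continuous on all of $E_k$ including the branch point, (b) not disturbing the boundary values, and (c) doing it simultaneously for all sub-arcs of $f$ that hit $P_n$. The cleanest route is probably to first intersect with the dense $G_\delta$ of embeddings, so that at the density step one is re-routing a single embedded arc rather than an arbitrary continuous image, and then to invoke a local ``pushing off a peripheral circle'' lemma: any arc in $\mathcal{S}$ meeting a peripheral circle $P$ in a point other than a prescribed finite set can be $\epsilon$-pushed to an arc meeting $P$ only in that prescribed set, using the density of the complementary Jordan regions together with the fact that $\mathcal{S}$ minus a peripheral circle is still connected and locally connected. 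Once $U := \bigcap_n U_n$ and the embedding $G_\delta$ are both shown dense open, Baire gives a map $h$ in the intersection; $h$ is an embedding of $E_k$ into $\mathcal{S}$, $h(e_i(1)) = p_i$, and $h(E_k) \cap \bigcup \sP = \{p_1, \dots, p_k\}$, which is exactly the conclusion. Finally I would remark that taking $k=2$ recovers \Sierpinski's arc-embedding theorem, and that the hub being off every peripheral circle is what forces the intersection with $\bigcup\sP$ to be \emph{exactly}, not merely contained in, $\{p_i\}$.
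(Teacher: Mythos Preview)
Your proposal has a genuine gap at the step you flag as ``standard.'' You claim that for maps from the $1$--complex $E_k$ into $\mathcal{S}$, the injective maps form a dense $G_\delta$ in the function space, citing ``the standard dimension-theoretic fact'' for targets of dimension $\le 1$. That is not what the standard theorems say. The N\"obeling--Menger--Pontryagin type results require the target to have dimension at least $2n+1$ where $n$ is the dimension of the domain; for a $1$--dimensional domain this means a target of dimension $\ge 3$ (or the Menger curve, via its universality). The \Sierpinski\ carpet is $1$--dimensional, so no such statement is available, and indeed it cannot hold in the form you need: $\mathcal{S}$ is planar, so maps of nonplanar graphs are never close to embeddings, and even for planar graphs there is no general-position mechanism inside a $1$--dimensional target to separate coincident arcs. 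Without this density, your Baire argument never gets started.

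The paper's approach dodges this obstacle by first collapsing every peripheral circle other than $P_1,\dots,P_k$ to a point. The resulting quotient $Q$ is a compact genus-zero surface with $k$ boundary circles---crucially, it is $2$--dimensional---and the infinitely many peripheral circles you were trying to push off become a countable set $T$ of interior points. In a surface one has room for general position of graphs, and avoiding a countable set of points is a local isotopy trick; the embedding in $\mathcal{S}$ is then obtained by lifting. So the quotient step is not cosmetic: it is precisely what buys the dimension needed for the approximation you want. It also makes your ``pushing off $P_n$'' step trivial, since each $P_n$ has been replaced by a single point.

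There is an instructive irony: the paper's own original argument applied Baire to the space of \emph{embeddings} $\sE \subset C(E_k,Q)$, which is not complete, and the appendix records this as an error. The corrected proofs there abandon Baire altogether, using either a cardinality argument (an uncountable pairwise-disjoint family of straight-line stars in $Q$, only countably many of which can meet $T$) or a relative version of \Sierpinski's embedding theorem. If you want to salvage a Baire-style argument, the natural fix is to work in $Q$ rather than $\mathcal{S}$ and to take $\Phi$ to be the complete space of continuous maps $E_k \to Q$ with the prescribed boundary values, then show that embeddings form a dense $G_\delta$ there (which is genuinely accessible in a surface) before intersecting with the open dense sets avoiding each $t \in T$.
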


\begin{proof}
Let $Q$ be the quotient space $\mathcal{S}/{\sim}$ formed by collapsing each peripheral circle $P\in\sP\setminus\{P_1,\ldots,P_k\}$ to a point.
Our strategy is to first show that $Q$ is an orientable, genus zero surface with $k$ boundary curves.
Then we apply the Baire Category Theorem and the fact that $E_k$ is $1$--dimensional to find embeddings of $E_k$ that avoid the countably many peripheral points of $Q$.
The conclusion of the Proposition is illustrated in Figure~\ref{fig:Arcs}.

\begin{figure}
\labellist
\small\hair 2pt
\pinlabel $P_1$ at 509 320
\pinlabel $P_2$ at 463 520
\pinlabel $P_3$ at 338 577
\pinlabel $P_4$ at 91 486
\pinlabel $P_5$ at 218 95

\pinlabel $p_1$ at 484 240
\pinlabel $p_2$ at 355 327
\pinlabel $p_3$ at 301 545
\pinlabel $p_4$ at 87 455
\pinlabel $p_5$ at 165 158

\pinlabel $E_5$ [r] at 208 355
\endlabellist
\includegraphics[scale=0.3]{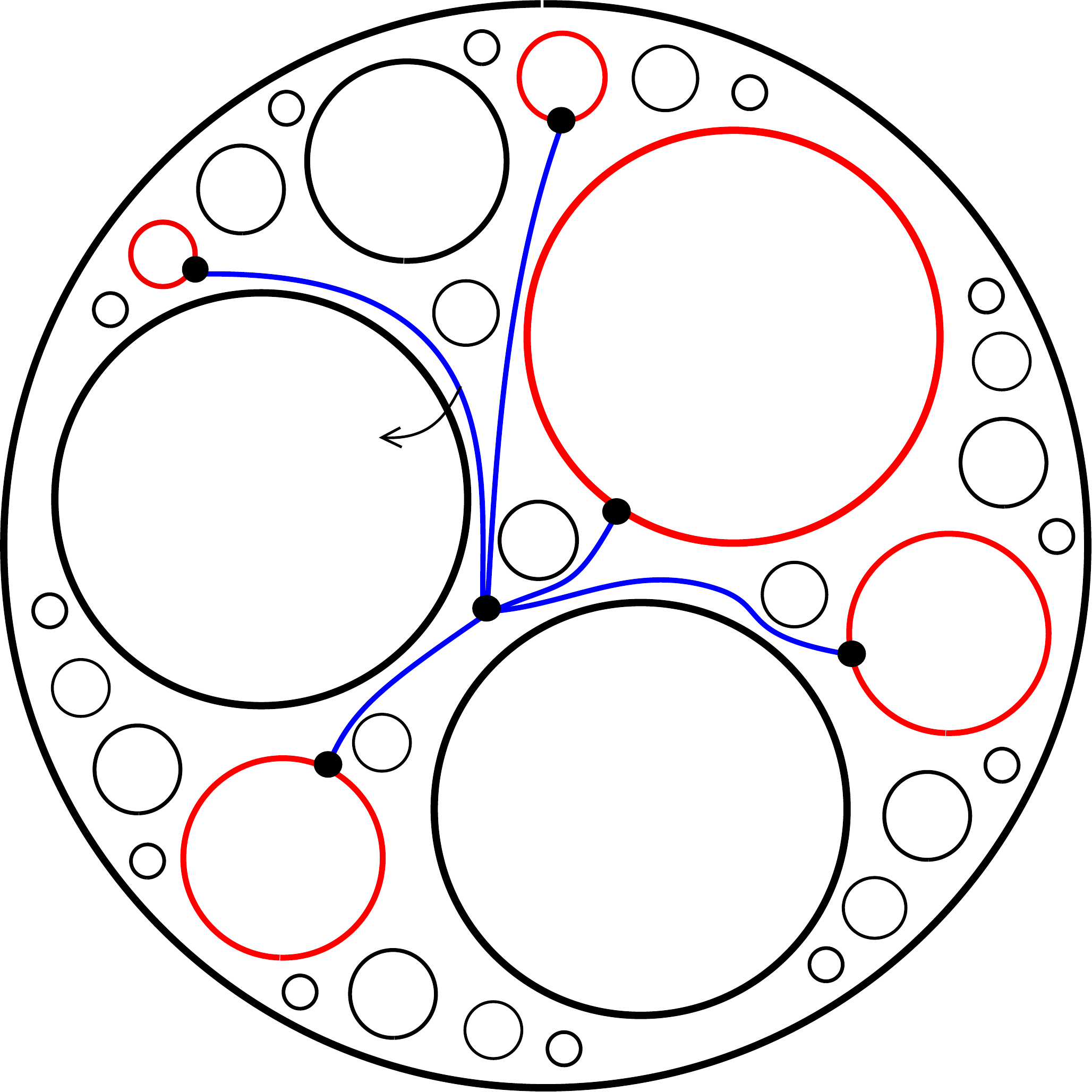}
\caption{An embedding of the graph $E_5$ that intersects the union of all peripheral circles precisely in the given points $p_1,\dots,p_5$.}
\label{fig:Arcs}
\end{figure}

We first check that $Q$ is a surface. Fix an embedding $\mathcal{S} \hookrightarrow S^2$ as in Remark~\ref{rem:PlanarSierpinski}.  We may form $Q$ from $S^2$ in two steps as follows.  First collapse each peripheral Jordan region to a point except for those bounded by the curves $P_1,\ldots,P_k$.  By a theorem of R.L. Moore \cite{Moore25}, this upper semicontinuous quotient of $S^2$ is again homeomorphic to $S^2$.  (In particular, the quotient is Hausdorff.)  The space $Q$ may be recovered from this quotient by removing the interiors of the regions bounded by $P_1,\ldots,P_k$.  Therefore $Q$ may be obtained from a $2$--sphere by removing the interiors of $k$ pairwise disjoint Jordan regions.

Let $\pi\colon \mathcal{S}\to Q$ be the associated quotient map.
By a slight abuse of notation we let $P_i\subset Q$ and $p_i \in Q$ denote $\pi(P_i)$ and $\pi(p_i)$.  A \emph{peripheral point} of $Q$ is the image of a peripheral circle $P\in\sP\setminus\{P_1,\ldots,P_k\}$.  Observe that the peripheral points are a countable dense set in $Q$.

Let $\sE$ be the space of all embeddings $\iota\colon E_k\hookrightarrow Q$ such that for each $i$ we have $\iota \of e_i(1) = p_i$ and the image of $\iota$ intersects $P_1 \cup \cdots \cup P_k$ only in the $k$ points $p_1,\ldots,p_k$.  We fix a metric $\rho$ on $Q$ and equip $\sE$ with the complete metric given by $d(f,g)=\sup\bigset{\rho\big(f(x),g(x)\big)}{x\in E_k}$.
Our strategy is to show that for each peripheral point $p$, the set of embeddings avoiding $p$ is open and dense in $\sE$.  It then follows by the Baire Category Theorem that there exists an embedding $\iota \in \sE$ whose image contains no peripheral points.

Toward this end, we fix an arbitrary peripheral point $p\in Q$.  Since $E_k$ is compact, the set of embeddings avoiding $p$ is open, so we only need to prove that it is dense.  Suppose $f \in \sE$ and $p$ is in the image of $f$. Let $\epsilon$ be any positive number small enough that the ball $B(p,\epsilon)$ lies in the interior of $Q$.  Since $f$ is a homeomorphism onto its image, the image $f(E_k)$ is $1$--dimensional and thus does not contain any $2$--dimensional disc.  In particular, there is at least one point $q\in B(p,\epsilon)$ not in the image $f(E_k)$.
Apply an isotopy $\Phi_t$ to $Q$ keeping $Q\setminus B(p,\epsilon)$ fixed and such that $\Phi_1(q)=p$.  Then $\Phi_1\circ f\colon E_k\hookrightarrow Q$ is an element of $\sE$ which misses $p$.  Furthermore its distance from $f$ is less than $2\epsilon$. Since $\epsilon$ may be chosen arbitrarily small, we conclude that the set of embeddings avoiding $p$ is open and dense in~$\sE$.

Since the quotient map $\pi\colon \mathcal{S} \to Q$ is one-to-one on the complement of the peripheral circles, we may lift any embedding $f \in \sE$ that avoids peripheral points to an embedding $E_k \hookrightarrow \mathcal{S}$ satisfying the conclusion of the proposition.
Indeed by compactness, $f(E_k)$ is closed in $Q$, so its preimage $\pi^{-1}f(E_k)$ in $\mathcal{S}$ is closed.  The restriction of $\pi$ to this compact preimage is a continuous bijection onto the Hausdorff space $f(E_k)$, so there is a continuous inverse function $\pi^{-1}$ defined on $f(E_k)$.  The composition $\pi^{-1}f$ is the desired lift.
\end{proof}

\section{Coxeter groups and the Davis--Moussong complex}
\label{sec:Coxeter}

Let $\Upsilon$ be a finite simplicial graph with vertex set $S$ whose edges are labeled by integers $\ge 2$.  Let $m_{st}$ denote the label on the edge $\{s,t\}$. If $s$ and $t$ are distinct vertices not joined by an edge, we let $m_{st}=\infty$.
The Coxeter group determined by $\Upsilon$
is the group
\[
   W = \bigpresentation{S}{s^2, (st)^{m_{st}} \text{ for all $s,t$ distinct elements of $S$}}.
\]
A \emph{Coxeter system} $(W,S)$ is a Coxeter group $W$ with generating set $S$ as above.

\begin{defn}
\label{defn:Nerve}
The \emph{nerve} of a Coxeter system $(W,S)$ is a metric simplicial complex with a $0$--simplex for each generator $s \in S$ and a higher simplex for each subset $T \subseteq S$ such that $T$ generates a finite subgroup of $W$.
\end{defn}

If $(W,S)$ is any Coxeter system, the Coxeter group $W$ acts properly, cocompactly, and isometrically on the associated \emph{Davis--Moussong complex} $\Sigma(W,S)$, a piecewise Euclidean $\CAT(0)$ complex such that the link $L$ of each vertex is equal to  the nerve of $(W,S)$ \cite{Davis83,Moussong88,Davis_book}.

We state here a result regarding limit sets of special subgroups.  The first part is a folklore result (see, for example, \Swiatkowski\ \cite{Swiatkowski_Coxeter16}).  The second part holds for all convex subgroups of $\CAT(0)$ groups (see, for instance, Swenson \cite{Swenson99}).

\begin{prop}
\label{prop:BoundarySpecialSubgp}
Let $(W,S)$ be any Coxeter system and let $W_T$ denote the special subgroup of $W$ generated by a subset $T \subset S$. 
\begin{enumerate}
    \item
    \label{item:SpecialSubgroup}
    The Davis--Moussong complex $\Sigma(W_T,T)$ is a convex subspace of $\Sigma(W,S)$
    whose limit set $\Lambda \Sigma(W_T,T)$ is naturally homeomorphic to the visual boundary of $\Sigma(W_T,T)$.
    \item
    \label{item:Intersection}
    For any two subsets $T$ and $T'$ of $S$, we have
    \[
       \Lambda \Sigma(W_T,T) \,\cap\, \Lambda \Sigma(W_{T'},T') = \Lambda \Sigma(W_{T \cap T'},T\cap T').
    \]
\end{enumerate}
\end{prop}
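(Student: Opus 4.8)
The plan is to separate the two assertions in part~(1) --- that $\Sigma(W_T,T)$ sits inside $\Sigma(W,S)$ as a convex subspace, and that its limit set coincides with its visual boundary --- and then to deduce part~(2) from a single geodesic ray through a basepoint common to all the relevant subcomplexes. The only ingredients I would import without proof are standard facts about the Davis--Moussong complex and Coxeter systems: (a) $\Sigma(W_T,T)$ embeds isometrically onto a convex subcomplex of $\Sigma(W,S)$, realized as the union of the $W_T$--translates of the subchamber $K_T$ of the fundamental chamber $K$ corresponding to $T$ (Davis \cite{Davis_book}); (b) $W_T \cap W_{T'} = W_{T \cap T'}$; and (c) under the identifications of~(a), $\Sigma(W_T,T) \cap \Sigma(W_{T'},T') = \Sigma(W_{T\cap T'},T\cap T')$, which follows from~(b) together with $K_T \cap K_{T'} = K_{T\cap T'}$ inside $K$. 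Note that each subchamber $K_T$ contains the vertex $x_0$ of $K$ indexed by the empty spherical subset, so $x_0$ lies in every one of these subcomplexes. Since $S$ is finite, $\Sigma(W,S)$ is a proper $\CAT(0)$ space, so its visual boundary is compact metrizable and sequences suffice below.

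Granting~(a), part~(1) reduces to the general statement that if $Y$ is a closed convex subspace of a $\CAT(0)$ space $X$ and $x_0 \in Y$, then the map $\partial_\infty Y \to \partial_\infty X$ sending an asymptote class of rays in $Y$ to its asymptote class in $X$ is a topological embedding whose image is the limit set $\Lambda Y$. I would check this directly: it is injective because two rays based in $Y$ are asymptotic in $Y$ if and only if they are asymptotic in $X$ (the relevant distance function is the same in either space); its image is exactly $\Lambda Y$ because a sequence $y_n \in Y$ converging to $\xi \in \partial_\infty X$ produces geodesic segments $[x_0,y_n] \subseteq Y$ converging uniformly on compact sets to a ray $[x_0,\xi)$, which lies in the closed set $Y$; and it is a homeomorphism onto its image because, in both directions, convergence in the cone topology is witnessed by convergence, uniformly on compact sets, of the rays from $x_0$, which lie in $Y$ by convexity. (Alternatively, one may invoke the $1$--Lipschitz nearest--point retraction $X \to Y$.) Taking $Y = \Sigma(W_T,T)$ gives part~(1); in particular every point of $\Lambda\Sigma(W_T,T)$ is the endpoint of a geodesic ray from $x_0$ lying entirely in $\Sigma(W_T,T)$.

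For part~(2), the inclusion $\Lambda\Sigma(W_{T\cap T'},T\cap T') \subseteq \Lambda\Sigma(W_T,T) \cap \Lambda\Sigma(W_{T'},T')$ is immediate, since $\Sigma(W_{T\cap T'},T\cap T')$ is a convex subcomplex of each of $\Sigma(W_T,T)$ and $\Sigma(W_{T'},T')$ and the limit set is monotone under inclusions of convex subspaces. For the reverse inclusion, let $\xi \in \Lambda\Sigma(W_T,T) \cap \Lambda\Sigma(W_{T'},T')$. By part~(1), the ray $[x_0,\xi)$ lies in $\Sigma(W_T,T)$ and also in $\Sigma(W_{T'},T')$, hence in $\Sigma(W_T,T) \cap \Sigma(W_{T'},T') = \Sigma(W_{T\cap T'},T\cap T')$ by~(c); therefore $\xi \in \Lambda\Sigma(W_{T\cap T'},T\cap T')$. (More generally, since every special subgroup of a $\CAT(0)$ group is a convex subgroup, part~(2) is an instance of the identity $\Lambda H_1 \cap \Lambda H_2 = \Lambda(H_1 \cap H_2)$ for convex subgroups, which can be proved without a common basepoint by a fellow--traveling and properness argument; see Swenson \cite{Swenson99}.)

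The main obstacle is not the $\CAT(0)$ geometry, which is entirely formal here, but the imported combinatorics --- principally~(a) and~(c). Once one knows that $\Sigma(W_T,T)$ is a convex subcomplex and that these subcomplexes intersect as expected, both parts follow from two soft lemmas: that the visual boundary of a closed convex subspace embeds as its limit set, and that a geodesic ray to a limit point of a closed convex set containing the basepoint remains inside that set. I would therefore cite Davis, \Swiatkowski, and Swenson for (a)--(c) and for the convexity of special subgroups, and spend no further effort on the $\CAT(0)$ side.
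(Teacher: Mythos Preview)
Your proposal is correct and aligns with the paper's treatment: the paper does not give a proof but simply records part~(1) as folklore (citing \Swiatkowski) and notes that part~(2) is a general fact about convex subgroups of $\CAT(0)$ groups (citing Swenson). Your sketch fills in exactly these details---convexity of $\Sigma(W_T,T)$ from Davis, the general limit-set--equals--boundary fact for closed convex subspaces, and the common-basepoint ray argument for the intersection---so there is nothing to correct.
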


\begin{rem}[$1$--dimensional nerves]
\label{rem:1DNerve}
A Coxeter group has a $1$--dimensional nerve $L$ if and only if $L$ does not contain a $2$--simplex.
A set of three generators $\{r,s,t\}$ bounds a $2$--simplex in $L$ precisely when it generates a finite subgroup, i.e., when $1/m_{rs} + 1/m_{st} + 1/m_{rt} < 1$.
(We follow the usual convention regarding $\infty$ by considering $1/m_{st}$ to equal zero when $m_{st}=\infty$.)
Therefore the nerve $L$ is $1$--dimensional if for each triangle, the sum above is $\ge 1$.
For example a Coxeter group has \emph{large type} if all $m_{st}$ satisfy $3 \le m_{st} \le \infty$.  Evidently all large type Coxeter groups have $1$--dimensional nerve.

In the $1$--dimensional case, the nerve $L$ is equal to the graph $\Upsilon$, the Davis--Moussong complex is $2$--dimensional, each face is isometric to a regular Euclidean $(2m_{st})$--sided polygon, and the nerve $L$ has a natural angular metric in which each edge $\{s,t\}$ has length $\pi - (\pi/m_{st})$. We refer the reader to \cite{Davis_book} for more background on Coxeter groups from the $\CAT(0)$ point of view.
\end{rem}

Coxeter groups of large type always have isolated flats---even when the nerve is not complete---by an observation of Wise (see \cite{Hruska2ComplexIFP} for details).
The following analogous result for Coxeter groups with nerve a complete graph follows immediately from Corollary~D of \cite{Caprace_Isolated}, since two adjacent edges in the nerve cannot both have label $2$.  

\begin{prop}
\label{prop:CompleteNerve}
Coxeter groups whose nerve is a complete graph always have isolated flats.
\end{prop}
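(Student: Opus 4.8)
The plan is to deduce the proposition from Corollary~D of \cite{Caprace_Isolated}, which characterizes those Coxeter systems $(W,S)$ whose Davis--Moussong complex has isolated flats purely in terms of the labeled defining graph $\Upsilon$. The first task is to recall that criterion precisely: $\Sigma(W,S)$ has isolated flats unless $\Upsilon$ contains a ``flat-producing'' configuration, that is, a subset $T\subseteq S$ with $W_T=W_{T_1}\times W_{T_2}$ for two infinite special subgroups, or an irreducible Euclidean special subgroup $W_T$ of rank at least $3$ together with a generator $s\in S\setminus T$ commuting with every element of $T$. In either case the configuration forces some vertex of $\Upsilon$ to be incident to two distinct edges both labeled $2$: in the first case every edge between $T_1$ and $T_2$ has label $2$, and an infinite two-generator special subgroup is impossible here (a two-generator special subgroup with finite $m_{st}$ is finite dihedral), so each $T_i$ has at least two vertices and one gets two such edges at a common vertex; in the second case the edges joining $s$ to the at least three vertices of $T$ all have label $2$.

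The hypothesis enters through the following elementary observation. If the nerve of $(W,S)$ is $1$--dimensional --- in particular if it equals the complete graph $K_n$ --- then no vertex of $\Upsilon$ is incident to two distinct edges both labeled $2$: if $m_{rs}=m_{rt}=2$ for three distinct generators $r,s,t$, then $1/m_{rs}+1/m_{st}+1/m_{rt}=1+1/m_{st}>1$, so $\{r,s,t\}$ generates a finite group and hence spans a $2$--simplex in the nerve (Remark~\ref{rem:1DNerve}), contradicting $1$--dimensionality. Thus neither forbidden configuration can occur, and Corollary~D of \cite{Caprace_Isolated} gives isolated flats.

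The step I expect to be the main obstacle is convincing oneself that Caprace's criterion really applies --- that is, pinning down its precise statement --- since these Coxeter groups are typically nonhyperbolic and do contain genuine $2$--dimensional flats: whenever three generators $r,s,t$ satisfy $1/m_{rs}+1/m_{st}+1/m_{rt}=1$, the subgroup $W_{\{r,s,t\}}$ is Euclidean and $\Sigma(W_{\{r,s,t\}})\cong\E^2$ embeds isometrically in $\Sigma(W,S)$. One must see that every such flat, and every $W$--translate of one, is isolated. For two distinct special subgroups $W_T,W_{T'}$ of Euclidean type this is immediate from the second part of Proposition~\ref{prop:BoundarySpecialSubgp}: since $\abs{T\cap T'}\le 2$ the group $W_{T\cap T'}$ is finite, so the two flats have disjoint limit sets and therefore bounded coarse intersection. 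For translates one uses that a proper parabolic subgroup of an irreducible Euclidean Coxeter group is finite, together with the fact --- again a consequence of the label condition above --- that $W_T$ admits no commuting generator outside $T$, so $W_T$ is its own normalizer up to finite index. Assembling the uniformity demanded by the definition of isolated flats is exactly what Caprace's theorem provides, so rather than reprove it I would simply invoke Corollary~D and verify its hypotheses as above.
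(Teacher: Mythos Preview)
Your approach is exactly the paper's: invoke Corollary~D of \cite{Caprace_Isolated} and verify its hypothesis by showing that no vertex of the defining graph can be incident to two edges both labeled~$2$. One small caveat: your phrasing suggests this label-$2$ observation follows from $1$--dimensionality of the nerve alone, but your inequality $1+1/m_{st}>1$ tacitly uses $m_{st}<\infty$, i.e., completeness of the nerve---which is of course precisely the hypothesis of the proposition, so the argument for the statement at hand is correct.
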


By Hruska--Kleiner \cite{HruskaKleinerIsolated}, the groups acting geometrically on $\CAT(0)$ spaces with isolated flats have a well-defined boundary in the following sense:
If $G$ acts geometrically on two $\CAT(0)$ spaces $X$ and $Y$ with isolated flats, then there exists a $G$--equivariant homeomorphism between their visual boundaries $\boundary X$ and $\boundary Y$.
This common boundary will be denoted $\boundary G$.

\begin{prop}
\label{prop:3Boundaries}
Let $W$ be a Coxeter group whose nerve is a complete graph $K_n$ with $n \ge 3$.
The boundary $\boundary W$ of $W$ is homeomorphic to either the circle, the \Sierpinski\ carpet, or the Menger curve.
\end{prop}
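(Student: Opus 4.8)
The plan is to apply the Haulmark criterion from \cite{HaulmarkCAT0}, which (extending Kapovich--Kleiner to the $\CAT(0)$ setting with isolated flats) guarantees that a one-ended $\CAT(0)$ group with isolated flats whose boundary is one-dimensional and satisfies the appropriate local structure conditions has boundary homeomorphic to the circle, the \Sierpinski\ carpet, or the Menger curve. So the work reduces to verifying the hypotheses of that criterion for $W$ with nerve $K_n$, $n \ge 3$. First I would record that $W$ is one-ended: the nerve $K_n$ is connected (so $W$ is one-ended by the standard characterization of ends of Coxeter groups in terms of the nerve, see \cite{Davis_book}) and $W$ is infinite for $n \ge 3$. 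Next, by Proposition~\ref{prop:CompleteNerve}, $W$ has isolated flats, so $\boundary W$ is well-defined by \cite{HruskaKleinerIsolated}.

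The next step is to confirm that $\boundary W$ is one-dimensional. Since the nerve is $1$--dimensional, the Davis--Moussong complex $\Sigma(W,S)$ is $2$--dimensional (Remark~\ref{rem:1DNerve}), so $\boundary W$ has dimension at most one. It is not zero-dimensional because $W$ is one-ended, so $\boundary W$ is connected and contains more than one point. Hence $\dim \boundary W = 1$. Then I would invoke the result of Hruska--Ruane \cite{HruskaRuaneLocalCon}: for $\CAT(0)$ groups with isolated flats, the visual boundary is locally connected precisely when a combinatorial "no separating pair/cut" type condition holds; for a Coxeter group with complete nerve this condition is satisfied because each peripheral flat is a single higher-rank flat whose boundary is a sphere (in fact a circle, since the flats here are $2$--dimensional by large-type-like considerations), and the complete-graph nerve ensures these circles are "linked" enough that no one of them locally disconnects. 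I would cite the relevant statement of \cite{HruskaRuaneLocalCon} and check its hypothesis using Proposition~\ref{prop:BoundarySpecialSubgp}: the limit set of each maximal flat subgroup is a circle, and distinct flats meet along limit sets of smaller special subgroups, which are the Cantor-set boundaries of infinite dihedral or free products — never locally separating.

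With local connectivity established, Haulmark's criterion \cite{HaulmarkCAT0} applies directly and yields the trichotomy. The main obstacle I expect is the bookkeeping needed to verify the precise hypotheses of \cite{HaulmarkCAT0} and \cite{HruskaRuaneLocalCon} — in particular confirming that no peripheral circle $\Lambda \Sigma(W_T, T)$ (for $T$ an edge of $K_n$) locally disconnects $\boundary W$, which is where the completeness of the nerve is genuinely used: given any edge $\{s,t\}$, the third generator $r$ (available since $n \ge 3$) provides a reflection wall crossing the corresponding flat, so the complement of that peripheral circle in the boundary stays connected. Once that local analysis is in place, the statement follows, and the identification of which of the three spaces occurs (distinguishing the circle and \Sierpinski\ carpet from the Menger curve via planarity, and the circle from the carpet via one-endedness versus surface-group structure) is carried out in the subsequent sections of the paper.
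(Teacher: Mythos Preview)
Your overall strategy---verify the hypotheses of Haulmark's criterion---is right, but you misidentify what those hypotheses are and miss the key idea the paper uses to verify them. The version of Haulmark's result invoked in the paper requires not merely one-endedness and local connectivity, but that $W$ \emph{does not split over a virtually cyclic subgroup}. The paper establishes this in one stroke via Serre's Property~FA: since $W$ is generated by finitely many elements each of order two, and each product $s_is_j$ has finite order (this is exactly where completeness of the nerve enters), Serre's criterion gives Property~FA, so $W$ admits no nontrivial splitting whatsoever. This replaces your entire local-connectivity analysis.

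Your detour through \cite{HruskaRuaneLocalCon} is not only unnecessary but contains concrete errors. You write that limit sets of two-generator special subgroups are ``Cantor-set boundaries of infinite dihedral or free products''; in fact, since the nerve is complete, every $m_{st}$ is finite, so each $W_{\{s,t\}}$ is a \emph{finite} dihedral group with empty limit set. Similarly, the flats in $\Sigma(W,S)$ are not indexed by edges of the nerve but by Euclidean triangle subgroups (triples with $1/m_{rs}+1/m_{st}+1/m_{rt}=1$), so your description of how flats intersect and your ``reflection wall crossing the flat'' argument do not match the actual peripheral structure. The paper's Property~FA argument sidesteps all of this bookkeeping. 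For the dimension claim, the paper cites \cite{Bestvina96} for $\dim\partial W \le 1$ and rules out dimension zero by noting $W$ is not virtually free (rather than by one-endedness alone).
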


\begin{proof}
A theorem due to Serre \cite[\S I.6.5]{Serre77} states that if $G$ is generated by a finite number of elements $s_1,\dots,s_n$ such that each $s_i$ and each product $s_i s_j$ has finite order, then $G$ has Serre's Property FA.  In other words, every action of $G$ on a simplicial tree has a global fixed point. Evidently $W$ satisfies Serre's criterion, and hence $W$ does not split as a nontrivial graph of groups.

Since $W$ acts geometrically on a $2$--dimensional $\CAT(0)$ space, its boundary has dimension at most $1$ by \cite{Bestvina96}.  As $W$ is infinite and not virtually free the dimension of the boundary must be exactly $1$, provided that $n \ge 3$.

The first author proves in \cite{HaulmarkCAT0} that a $\CAT(0)$ group with isolated flats with $1$--dimensional boundary that does not split over a virtually cyclic subgroup must have visual boundary homeomorphic to either the circle, the \Sierpinski\ carpet, or the Menger curve.
\end{proof}

Infinite Coxeter triangle groups always act as reflection groups on either the Euclidean plane or the hyperbolic plane.  In particular they have circle boundary.
The following proposition examines the case of Coxeter groups with nerve $K_4$.

\begin{prop}
\label{prop:W4}
Let $W$ be a Coxeter group whose nerve $L$ is a complete graph $K_4$ on $4$ vertices.
Then the boundary $\boundary W$ of $W$ is homeomorphic to the \Sierpinski\ carpet, and the limit set of each three generator special subgroup of $W$ is a peripheral circle.
\end{prop}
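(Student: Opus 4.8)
The plan is to identify the boundary of $W$ with a planar \Sierpinski\ carpet whose peripheral circles are exactly the limit sets of the three-generator special subgroups, and then apply Whyburn's characterization (Remark~\ref{rem:PlanarSierpinski}). First I would invoke Proposition~\ref{prop:3Boundaries}: since the nerve is $K_4$, the boundary $\boundary W$ is homeomorphic to the circle, the \Sierpinski\ carpet, or the Menger curve. The circle is ruled out because $W$ is not virtually a surface group (for instance, $W$ is not two-ended and does not act geometrically on $\Hyp^2$ or $\E^2$; alternatively one can note directly that $W$ contains nonabelian free subgroups and a circle-boundary group is virtually $\Z$ or a surface group, but $W$ has too many generators). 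So it remains to distinguish the carpet from the Menger curve, which amounts to showing that $\boundary W$ is planar.

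To prove planarity, I would use the geometry of the Davis--Moussong complex $\Sigma = \Sigma(W,S)$, which in the $1$--dimensional-nerve case is a $2$--dimensional piecewise-Euclidean $\CAT(0)$ complex (Remark~\ref{rem:1DNerve}). The key point is that $K_4$ is a planar graph, so the nerve $L$ embeds in $S^2$; correspondingly $\Sigma$ can be realized so that it sits inside a $\CAT(0)$ $2$--manifold (or a $\CAT(0)$ space that is planar in the appropriate sense). More precisely, for $K_4$ one expects $W$ to act as a reflection group on a convex subset of $\Hyp^3$ with fundamental chamber a (possibly ideal) convex polytope — this is exactly the content of case (2) of Theorem~\ref{thm:Main}. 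Granting that reflection-group picture, the Davis--Moussong complex is quasi-isometric to (and has the same boundary as) this convex subset $X \subseteq \Hyp^3$, and $\boundary X$ is then a closed subset of $\boundary \Hyp^3 = S^2$, hence planar. The planar compactum $\boundary W$ is $1$--dimensional, connected, locally connected (by \cite{HruskaRuaneLocalCon}, since the relevant isolated-flats group has locally connected boundary under the hypothesis that no three-generator parabolic has boundary with a cut point — the three-generator subgroups here have circle boundary), has no local cut points away from the peripheral circles, and has empty interior in $S^2$; by Whyburn's theorem such a space is the \Sierpinski\ carpet.

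For the second assertion, I would analyze the peripheral circles directly. Each three-generator special subgroup $W_T$ with $T \subset S$, $|T|=3$, is an infinite Coxeter triangle group (the triangle condition $1/m_{rs} + 1/m_{st} + 1/m_{rt} \le 1$ holds since the nerve is $1$--dimensional), so by the remark preceding Proposition~\ref{prop:W4} it acts as a reflection group on $\E^2$ or $\Hyp^2$ and has circle boundary. By Proposition~\ref{prop:BoundarySpecialSubgp}\eqref{item:SpecialSubgroup}, $\Lambda\Sigma(W_T,T)$ is an embedded circle in $\boundary W$. To see that such a circle is peripheral — i.e., that removing it does not disconnect $\boundary W$ — I would argue that $\Sigma(W_T,T)$ is a totally geodesic $\CAT(0)$ plane inside $\Sigma$ whose complement, together with its limit set, is connected; equivalently, in the $\Hyp^3$ model the circle bounds one of the removed Jordan regions. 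I also need to check the converse, that \emph{every} peripheral circle of $\boundary W$ arises this way, or at least enumerate the peripheral circles sufficiently to match Whyburn's family: peripheral circles are characterized intrinsically (an embedded circle whose removal does not disconnect), and the $W$--translates of the circles $\Lambda\Sigma(W_T,T)$ over all size-three $T$ form a $W$--equivariant null family of disjoint circles with dense union of complementary regions, matching the hypothesis of Whyburn's theorem.

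The main obstacle I anticipate is establishing the planarity of $\boundary W$ rigorously — specifically, producing the reflection-group action on a convex subset of $\Hyp^3$ (case (2) of the main theorem), or otherwise giving a direct $G$--equivariant embedding of $\boundary W$ into $S^2$. One clean route is combinatorial: realize the fundamental chamber as a Coxeter polytope in $\Hyp^3$ using Andreev-type conditions (the combinatorics of $K_4$ as the $1$--skeleton of a tetrahedral chamber, with dihedral angles $\pi/m_{st}$), check that Andreev's theorem or its ideal-vertex extension applies given $3 \le m_{st} < \infty$, and conclude that $W$ is the reflection group of a convex polytope $P \subset \Hyp^3$; then $\Sigma$ is $W$--equivariantly quasi-isometric to the convex hull of the $W$--orbit of $P$, whence $\boundary W \hookrightarrow S^2$. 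The secondary difficulty is the careful verification that the limit-set circles are exactly the peripheral circles and not merely \emph{among} them — this requires knowing that any embedded non-separating circle in $\boundary W$ bounds a complementary region in the $S^2$ model, which follows from Whyburn's characterization once the null-family structure of the limit-set circles is in hand, combined with the fact that flats in an isolated-flats space are isolated so their limit sets (here, circles) cannot accumulate badly.
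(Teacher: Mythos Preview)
Your outline is essentially correct, but it takes a genuinely different route from the paper on both of the two main points.

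For planarity, you propose realizing $W$ as a reflection group on a convex subset of $\Hyp^3$ via Andreev's theorem, and then reading off $\boundary W \hookrightarrow S^2$ from $\boundary\Hyp^3$. This works, and indeed the paper sketches exactly this realization in a remark \emph{after} the proposition. But the paper's actual proof of planarity is purely combinatorial and avoids Andreev entirely: since $L=K_4$ is planar, one extends $L$ to a metric flag triangulation $\hat L$ of $S^2$ by coning off each complementary region with right-angled edges. The resulting Coxeter group $W_{\hat L}$ has nerve $\hat L = S^2$, hence $\boundary W_{\hat L} \cong S^2$, and $W$ sits inside $W_{\hat L}$ as a special subgroup, so $\boundary W$ embeds in $S^2$ by Proposition~\ref{prop:BoundarySpecialSubgp}. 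This doubling construction (citing \cite{DO01}) sidesteps the obstacle you flagged as your main worry. The paper then rules out the circle and the Menger curve in one stroke: $\boundary W$ is planar and contains at least two disjoint embedded circles (the limit sets of two distinct three-generator subgroups), so among the three options of Proposition~\ref{prop:3Boundaries} only the carpet survives. You do not need a separate argument that $W$ is not virtually a surface group, nor a direct appeal to Whyburn's hypotheses.

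For the peripheral-circle claim, you argue geometrically in the $\Hyp^3$ model that each such circle bounds a deleted Jordan region. The paper instead uses relative hyperbolicity: $W$ is hyperbolic relative to the family $\mathcal T$ of three-generator special subgroups \cite{Caprace_Isolated}, and by Tran the Bowditch boundary $\boundary(W,\mathcal T)$ is obtained from $\boundary W$ by collapsing each circle $\Lambda W_T$ (and its conjugates) to a point. Since $W$ has Property~FA, Bowditch's theorem gives that $\boundary(W,\mathcal T)$ has no cut points; hence no such circle can separate $\boundary W$, i.e., each is peripheral. This argument is model-free and dispatches the issue without the careful convex-geometry verification your sketch would require. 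Also note that the statement only asserts that these particular circles are peripheral; you need not show that every peripheral circle arises this way, so your ``secondary difficulty'' is not actually part of the task.
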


\begin{proof}
The nerve $L$ of $W$ is planar, so $W$ embeds as a special subgroup of a Coxeter group with visual boundary $S^2$ by a well-known doubling construction. (See, for example, \cite{DO01}.)
Indeed, one embeds $L$ into $S^2$, and then fills each complementary region in the sphere with $2$--simplices by adding a vertex in the interior of the region and coning off the boundary of the region to the new vertex.  Each such cone is ``right-angled'' in the sense that each added edge $\{s,t\}$ is assigned the label $m_{st} =2$.
This procedure produces a metric flag triangulation $\hat{L}$ of $S^2$, which has $L$ as a full subcomplex. Let $W_{\hat{L}}$ be the Coxeter group determined by the $1$--skeleton of $\hat{L}$, and having the triangulated $2$--sphere $\hat{L}$ as its nerve.  Then $\boundary W_{\hat{L}}$ is homeomorphic to $S^2$.
By Proposition~\ref{prop:BoundarySpecialSubgp}(\ref{item:SpecialSubgroup}), it follows that $\boundary W$ is planar.

Let $\mathcal{T}$ be the collection of three generator special subgroups of $W$. 
Each element $W' \in\mathcal{T}$ is an infinite triangle reflection group, i.e., either Euclidean or hyperbolic type.
By Proposition~\ref{prop:BoundarySpecialSubgp} the circle boundary of each $W' \in \mathcal{T}$ embeds in $\boundary W$, and these circles are pairwise disjoint.  Since $\boundary W$ is planar and contains more than one circle, it must be homeomorphic to the \Sierpinski\ carpet by Proposition~\ref{prop:3Boundaries}.

The group $W$ is hyperbolic relative to $\mathcal{T}$ by \cite{Caprace_Isolated}. Hung Cong Tran has shown that the Bowditch boundary is the quotient space obtained from the visual boundary $\boundary W$ by collapsing the limit sets of the three generator special subgroups and their conjugates to points \cite{Tran13}. Since $W$ has Property FA, its Bowditch boundary $\boundary(W,\mathcal{T})$ has no cut points \cite{Bowditch01}. It follows that the limit set of a three-generator special subgroup (or any of its conjugates) is always a peripheral circle of the \Sierpinski\ carpet.
\end{proof}

In fact, the group $W$ in the preceding proposition acts on $\Hyp^3$ as a geometrically finite reflection group, as described below.

\begin{rem}
In the special case where the nerve is $K_4$ and every $m_{st}=3$, the group $W$ is an arithmetic nonuniform lattice acting on $\Hyp^3$ as the group generated by the reflections in the faces of a regular ideal tetrahedron and
is commensurable with the fundamental group of the figure eight knot compliment and the Bianchi group $\textup{PGL}(2,\mathcal{O}_3)$. The relationship between $W$ and the figure eight knot group is discussed, for example, by Maclachlan--Reid (see Section~4.7.1 and Figure~13.2 of \cite{MaclachlanReid03}).

More generally each Coxeter group with nerve $K_4$ acts as a reflection group on a convex subset of $\Hyp^3$ with fundamental chamber a possibly ideal convex polytope.
Start with the triangulation $\hat{L}$ of $S^2$ described in the proof of Proposition~\ref{prop:W4}, and replace each right-angled cone on a Euclidean triangle with a single $2$--simplex.  The dual polytope $K$ has a (possibly ideal) hyperbolic metric by Andreev's theorem (see Theorem~3.5 of \cite{Schroeder09} for a detailed explanation).
The reflections in the faces of $K$ generate a Coxeter group that contains $W$ as a special subgroup.  The union of all $W$--translates of $K$ is a convex subspace of $\Hyp^3$ on which $W$ acts as a reflection group with fundamental chamber $K$.
\end{rem}
\section{Proof of the main theorem}

The goal of this section is to prove that the boundary of a Coxeter group $W$ is homeomorphic to the Menger curve when the nerve is $K_n$ for $n \ge 5$.
By Proposition~\ref{prop:3Boundaries}, it suffices to show that $\boundary W$ is nonplanar when $n\ge 5$.  Thus the following result completes the proof of Theorem~\ref{thm:Main}.

\begin{prop}
If $W$ is any Coxeter group with nerve $K_n$ for $n \ge 5$, then
the complete graph $K_5$ embeds in $\boundary W$.  In particular, $\boundary W$ is not planar.
\end{prop}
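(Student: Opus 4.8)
The plan is to use the structure of the Davis--Moussong complex $\Sigma$ for $W$ together with the results already established, especially Proposition~\ref{prop:W4}, Proposition~\ref{prop:StarInSierpinski}, and Proposition~\ref{prop:BoundarySpecialSubgp}. Fix five of the $n$ generators, say $T = \{s_1,\dots,s_5\} \subseteq S$; the special subgroup $W_T$ has nerve $K_5$, and since $\Lambda\Sigma(W_T,T)$ is a (closed, hence boundary-realizing) subspace of $\boundary W$ by Proposition~\ref{prop:BoundarySpecialSubgp}(\ref{item:SpecialSubgroup}), it suffices to produce an embedded $K_5$ in $\boundary W_T$. So I may as well assume $n = 5$. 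The idea is to realize the five ``vertices'' of the embedded $K_5$ as five points $q_1,\dots,q_5 \in \boundary W$, one lying on the limit set of each of the $\binom{5}{4} = 5$ four-generator special subgroups, and to realize the ten ``edges'' as embedded arcs, each contained in the limit set of one of the $\binom{5}{3} = 10$ three-generator special subgroups (which are circles), with all these circles pairwise meeting only where forced by Proposition~\ref{prop:BoundarySpecialSubgp}(\ref{item:Intersection}).

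The key construction step is the following. For each $4$-element subset $T' \subset T$, the subgroup $W_{T'}$ has nerve $K_4$, so by Proposition~\ref{prop:W4} its boundary is a \Sierpinski\ carpet $\mathcal{S}_{T'}$ sitting inside $\boundary W$, and the four circles $\Lambda\Sigma(W_{T''},T'')$ for the four $3$-subsets $T'' \subset T'$ are peripheral circles of $\mathcal{S}_{T'}$ (again by Proposition~\ref{prop:W4}). Now I invoke Proposition~\ref{prop:StarInSierpinski} inside $\mathcal{S}_{T'}$: I want to connect these four peripheral circles by an embedded copy of $E_4$ meeting the union of all peripheral circles only at the four chosen endpoints. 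The plan is to organize the choices so that the endpoints are consistent across the five carpets. Concretely, each $3$-subset $T''$ lies in exactly two of the $4$-subsets, and its circle $P_{T''} = \Lambda\Sigma(W_{T''},T'')$ is a peripheral circle in both of the corresponding carpets; by Proposition~\ref{prop:BoundarySpecialSubgp}(\ref{item:Intersection}) the intersection of the two carpets is exactly $P_{T''}$. So I first fix, for each $3$-subset $T''$, a single point $x_{T''} \in P_{T''}$ (with all ten such points distinct, which is possible since a circle is uncountable while the peripheral-circle constraints are closed nowhere-dense). Then in each carpet $\mathcal{S}_{T'}$ I apply Proposition~\ref{prop:StarInSierpinski} with the four marked points being the $x_{T''}$ for $T'' \subset T'$; this produces an embedded $E_4^{(T')} \subset \mathcal{S}_{T'}$ with central vertex $q_{T'}$ and leaves at the four points $x_{T''}$, meeting all peripheral circles of $\mathcal{S}_{T'}$ only at those leaves. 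Taking $q_1,\dots,q_5$ to be the five central vertices $q_{T'}$, the union $\bigcup_{T'} E_4^{(T')}$ is a graph in which $q_{T'}$ has degree four and each $x_{T''}$ has degree two (one half-edge from each of the two carpets containing $P_{T''}$), so the $x_{T''}$ are interior points of edges; suppressing them yields exactly one edge between $q_{T'}$ and $q_{T'''}$ for each pair of $4$-subsets, i.e.\ a $K_5$ on $\{q_1,\dots,q_5\}$.

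The main obstacle — and the part that needs genuine care — is verifying that this union is actually an \emph{embedded} graph, i.e.\ that the five stars $E_4^{(T')}$ meet only in the way prescribed and nowhere else. Two stars $E_4^{(T')}$ and $E_4^{(\tilde{T}')}$ live in carpets whose intersection is the single peripheral circle $P_{T''}$, where $T'' = T' \cap \tilde{T}'$; since each star meets peripheral circles only at its leaves, $E_4^{(T')} \cap E_4^{(\tilde T')} \subseteq P_{T''}$ meets $P_{T''}$ only at the common leaf $x_{T''}$. That handles pairwise intersections. For triple-or-higher intersections one uses that $T' \cap \tilde T' \cap \hat T'$ has at most two elements when the three $4$-subsets are distinct, so $\Lambda\Sigma(W_{T'\cap\tilde T'\cap \hat T'}, \cdot)$ is the limit set of a special subgroup with $\le 2$ generators — either empty or the boundary of a dihedral/infinite-dihedral group (a point or a pair of points), which contains none of the $x_{T''}$ — so no new intersections arise. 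One also needs that distinct central vertices $q_{T'}$ are genuinely distinct and distinct from all the $x_{T''}$: since $q_{T'}$ lies on no peripheral circle of $\mathcal{S}_{T'}$ by the last clause of Proposition~\ref{prop:StarInSierpinski}, it is not any $x_{T''}$, and $q_{T'} \ne q_{\tilde T'}$ because $q_{T'}$ lies in the carpet $\mathcal{S}_{T'}$ and, if it also lay in $\mathcal{S}_{\tilde T'}$, it would lie in $P_{T'\cap\tilde T'}$, a peripheral circle of $\mathcal{S}_{T'}$, contradicting the same clause. Assembling these observations, the union is a subcomplex of $\boundary W$ homeomorphic to $K_5$, so $\boundary W$ contains $K_5$ and is therefore nonplanar, which with Proposition~\ref{prop:3Boundaries} forces $\boundary W$ to be the Menger curve.
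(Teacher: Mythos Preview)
Your proof is correct and follows essentially the same approach as the paper: reduce to the five-generator case, view the boundaries of the five four-generator special subgroups as \Sierpinski\ carpets intersecting pairwise along the peripheral circles coming from the three-generator subgroups, choose a common point on each such circle, and use Proposition~\ref{prop:StarInSierpinski} to embed a $4$--star in each carpet with leaves at those points; the union of the five stars is the desired $K_5$. Your verification that the stars meet only where intended is more explicit than the paper's (which simply asserts it), but the underlying argument is the same use of Proposition~\ref{prop:BoundarySpecialSubgp}(\ref{item:Intersection}) together with the ``avoids all peripheral circles except at the leaves'' clause of Proposition~\ref{prop:StarInSierpinski}.
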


\begin{proof}
Let $W_5$ be any five generator special subgroup of $W$. By Proposition~\ref{prop:BoundarySpecialSubgp}(\ref{item:SpecialSubgroup}) it suffices to embed $K_5$ into $\boundary W_5$.

Suppose $s_1, \ldots, s_5$ are the five generators of $W_5$.  For each $i \in \{1,\dots,5\}$, let $W_4^i$ be the special subgroup of $W_5$ generated by $\{ s_1, \ldots, \hat{s_i}, \ldots, s_5 \}$. The limit set $\Lambda W_4^i$ is homeomorphic to the Sierpinski carpet by Proposition~\ref{prop:W4}.
Similarly for each $i\ne j$ in $\{1,\dots,5\}$ let $W_3^{i,j}$ denote the special subgroup generated by $\{ s_1, \ldots, \hat{s_i}, \ldots, \hat{s_j}, \ldots, s_5\}$, whose limit set is a circle.

Since $W_3^{i,j}$ is a subgroup of $W_4^i$, its limit set is a peripheral circle of the \Sierpinski\ carpet $\Lambda W_4^i$, and similarly it is a peripheral circle in the carpet $\Lambda W_4^j$.
Indeed this circle is precisely the intersection of these two \Sierpinski\ carpets by Proposition~\ref{prop:BoundarySpecialSubgp}(\ref{item:Intersection}).
The five \Sierpinski\ carpets $\Lambda W_4^i$ and their circles of intersection are illustrated in Figure~\ref{fig:K5}.

\begin{figure}
\vspace{15pt}
\labellist
\small\hair 2pt
\pinlabel $\Lambda W_4^{1}$ at 280 575
\pinlabel $\Lambda W_4^{2}$ at 591 294
\pinlabel $\Lambda W_4^{3}$ at 477 4
\pinlabel $\Lambda W_4^{4}$ at 98 0
\pinlabel $\Lambda W_4^{5}$ at -30 277

\pinlabel $v_1$ at 278 442
\pinlabel $v_2$ at 470 304
\pinlabel $v_3$ at 413 90
\pinlabel $v_4$ at 164 82
\pinlabel $v_5$ at 95 301

\pinlabel $p_{1,5}$ at 245 488
\pinlabel $p_{5,1}$ at 76 354
\pinlabel $p_{1,2}$ at 313 490
\pinlabel $p_{2,1}$ at 486 358
\pinlabel $p_{1,3}$ at 313 426
\pinlabel $p_{1,4}$ at 247 430

\pinlabel $\Lambda W_3^{1,2}$ [l] at 357 528 
\pinlabel $\Lambda W_3^{1,3}$ at 387 411
\pinlabel $\Lambda W_3^{1,4}$ at 172 406
\pinlabel $\Lambda W_3^{1,5}$ [r] at 199 528 

\pinlabel $\Lambda W_3^{5,1}$ [r] at 38 401
\pinlabel $\Lambda W_3^{2,1}$ [l] at 524 403
\endlabellist
\includegraphics[scale=0.65]{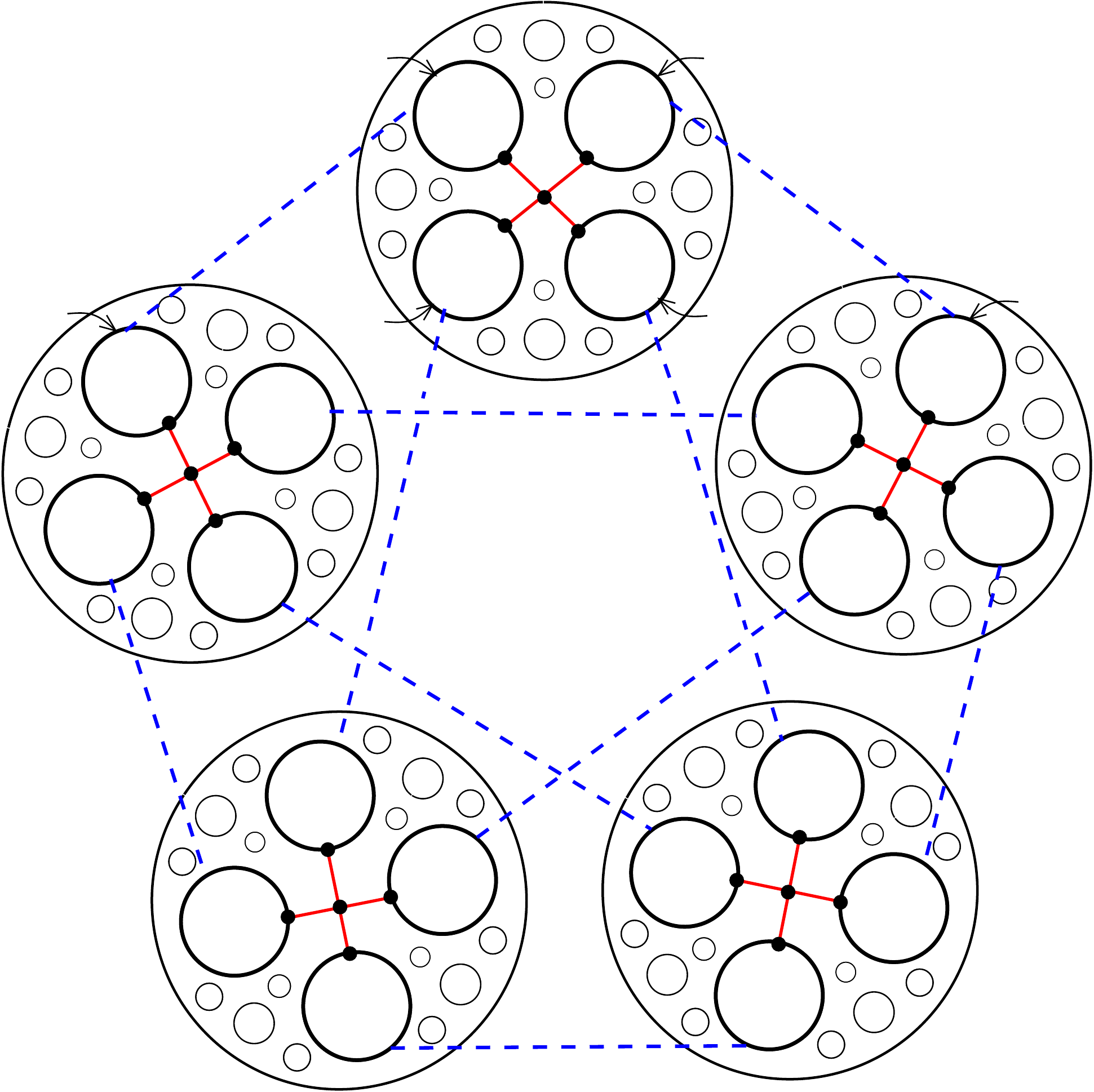}
\caption{The five \Sierpinski\ carpets $\Lambda W_4^1, \dots \Lambda W_4^5$ and their circles of intersection.  The dotted edges between carpets indicate the pairs of peripheral circles that are identified in $\boundary W_5$.}
\label{fig:K5}
\end{figure}

Choose points $p_{i,j}$ on the circles $\Lambda W_3^{i,j}$ such that $p_{i,j} = p_{j,i}$ for $i \not= j$.
Let $E_4^i$ be a collection of $4$-pointed stars for $1\leq i\leq 5.$
For a fixed $i$, we label the four edges of $E_4^i$ as $e_j^i$, where $1 \le j \le 5$ and $j\not=i$. By Proposition~\ref{prop:StarInSierpinski} for every $i$, there is a topological embedding $h_i \colon E_4^i \rightarrow \boundary W_4^i$ such that $h_i \circ e_j^i (1) =  p_{i,j} \in \boundary W_3^{i,j}$. Then $h_i \circ e_j^i(0)$ is the center of the star in $\Lambda W_4^i$, and we will denote it by $v_i$.

The union of the five stars is an embedded complete graph $K_5$ in $\boundary W_5$. Indeed, we have 5 vertices $v_1, \ldots, v_5$ and an edge between every two vertices. An edge between vertices $v_i$ and $v_j$ is given by concatenating the images of the edges $e_j^i$ and $e_i^j$ in $\Lambda W_4^i$ and $\Lambda W_4^j$ respectively.
These edges do not intersect except at their endpoints $v_i$.
\end{proof}

\appendix
\section{Erratum to ``Nonhyperbolic Coxeter groups with Menger boundary''}

The purpose of this erratum is to correct the proof of Proposition~2.3 of \cite{HHS}.  
A classical theorem of \Sierpinski\ states that every subspace of dimension at most one in the $2$--dimensional disc $D^2$ can be topologically embedded in the \Sierpinski\ carpet.
The proof of Proposition~2.3 of \cite{HHS} implicitly provides a relative version of \Sierpinski's theorem.
Unfortunately the proof of Proposition~2.3 given in \cite{HHS} is incorrect.

We provide two brief proofs that each fill this gap. One is a self-contained argument suited specifically for the needs of \cite{HHS}, and in the other we explicitly prove a relative embedding theorem that produces embeddings in the \Sierpinski\ carpet with certain prescribed boundary values.

\subsection{Introduction}
\label{sec:Introduction}

The \emph{$k$--pointed star} $E_k$ is the graph obtained as the cone of a discrete set of $k$ points $p_1,\dots,p_k$. 
Let $e_1,\dots,e_k$ be the edges of $E_k$, which we think of as embeddings of $[0,1]$ into $E_k$ parametrized such that $e_i(0) =e_j(0)$ for all $i,j\in\{1,\dots,k\}$.
Proposition~2.3 of \cite{HHS} states the following:

\begin{prop}[\cite{HHS}, Prop.~2.3]
\label{prop:StarInSierpinski}
Let $P_1,\ldots, P_k\in \mathcal{P}$ be distinct peripheral circles in the \Sierpinski\ carpet $\mathcal{S}$, and fix points $p_i \in P_i$. There is a topological embedding $h\colon E_k\hookrightarrow\mathcal{S}$ such that $h\circ e_i(1) = p_i$ for each $i\in\{1,\ldots, k\}$.
Furthermore the image of $E_k$ intersects the union of all peripheral circles precisely in the given points $p_1,\dots,p_k$.
\end{prop}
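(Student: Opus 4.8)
The plan is to keep the architecture of the argument in \cite{HHS} — collapse all peripheral circles of $\mathcal{S}$ other than $P_1,\dots,P_k$ to obtain a surface $Q$, produce by a Baire category argument an embedding of $E_k$ into $Q$ that avoids the (countably many, dense) collapsed points, and lift it back to $\mathcal{S}$ — but to carry out the Baire argument in a genuinely complete metric space. The gap in \cite{HHS} is exactly that the set of embeddings $E_k\hookrightarrow Q$, with the supremum metric, is not complete: a Cauchy sequence of embeddings converges uniformly to a map that need not be injective.

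The surface $Q$ is built as in \cite{HHS}: fix an embedding $\mathcal{S}\hookrightarrow S^2$ onto a planar carpet, collapse each complementary Jordan region except those bounded by $P_1,\dots,P_k$, apply R.~L.~Moore's theorem to see the quotient is again $S^2$, and delete the interiors of the $k$ surviving regions; this realizes $Q$ as a genus-zero surface with $k$ boundary circles, with quotient map $\pi\colon\mathcal{S}\to Q$. Write $\mathcal{Z}\subset\interior Q$ for the countable dense set of images of collapsed circles. As in \cite{HHS} it suffices to construct an embedding $g\colon E_k\hookrightarrow Q$ with $g\of e_i(1)=\pi(p_i)$ for each $i$, with $g(E_k)\cap\partial Q=\{\pi(p_1),\dots,\pi(p_k)\}$, and with $g(E_k)\cap\mathcal{Z}=\varnothing$; such a $g$ lifts through $\pi$ to the required embedding of $E_k$ into $\mathcal{S}$ by the compactness argument already given in \cite{HHS}.

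The correction is to replace the incomplete space of embeddings by the complete space $\mathcal{C}$ of \emph{all} continuous maps $f\colon E_k\to Q$ with $f\of e_i(1)=\pi(p_i)$ for every $i$, metrized by the supremum distance; as a closed subspace of $C(E_k,Q)$ it is complete. Inside $\mathcal{C}$ I would consider: for each $p\in\mathcal{Z}$, the set $U_p$ of maps whose image omits $p$; the set $\mathcal{B}$ of maps whose image meets $\partial Q$ only in $\{\pi(p_1),\dots,\pi(p_k)\}$; and the set $\mathcal{E}$ of embeddings. A short compactness argument shows $U_p$ is open and that $\mathcal{B}$ and $\mathcal{E}$ are each a countable intersection of open sets (for $\mathcal{B}$, exhaust $E_k\setminus\{e_1(1),\dots,e_k(1)\}$ by compacta and require their images to miss the compact set $\partial Q$; for $\mathcal{E}$, use $\mathcal{E}=\bigcap_n\bigset{f}{\inf\{\rho(f(x),f(y)):d_{E_k}(x,y)\ge 1/n\}>0}$). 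Granting that $\mathcal{B}\cap\mathcal{E}$ is dense in $\mathcal{C}$, every open set occurring in these presentations is dense as well — density of $\mathcal{B}\cap\mathcal{E}$ handles those coming from $\mathcal{B}$ and $\mathcal{E}$, while for $U_p$ one first passes to a nearby embedding and then applies the finger move of \cite{HHS} (an embedded $E_k$ is $1$--dimensional, hence contains no disc, so some point $q$ near $p$ misses the image; a small ambient isotopy of $Q$ supported near $p$, fixing $\pi(p_1),\dots,\pi(p_k)$, and taking $q$ to $p$, pushes the map slightly off $p$). The Baire Category Theorem then yields a map in $\mathcal{B}\cap\mathcal{E}\cap\bigcap_{p\in\mathcal{Z}}U_p$, which is the desired $g$.

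The step I expect to be the genuine obstacle is the density of $\mathcal{B}\cap\mathcal{E}$ in $\mathcal{C}$: one must uniformly approximate an arbitrary map $E_k\to Q$, with the leaves pinned at $\pi(p_1),\dots,\pi(p_k)$, by an embedding touching $\partial Q$ only at those $k$ points. This is the one substantive topological input, and it is where one uses that $E_k$ is a tree — a tree embeds in every surface, and, unlike graphs containing $K_5$ or $K_{3,3}$, every map of a tree to a surface can be pushed to an embedding: approximate first by a map in general position, then resolve its finitely many self-crossings, building the star one edge at a time and routing each successive edge (after an initial stub near the centre) through the complement of a regular neighbourhood of the edges already placed. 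This is precisely the classical fact underlying \Sierpinski's theorem, so I would either invoke it or include the short inductive construction. I would also note the alternative suggested by the erratum's phrasing: running this same inductive routing directly inside $Q$, avoiding $\mathcal{Z}$ as one proceeds, gives a self-contained construction of $g$ with no appeal to Baire category.
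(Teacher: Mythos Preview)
Your approach is sound but differs from both corrections the paper actually supplies.  The erratum does not repair the Baire argument at all; it reduces (as you do) to producing an embedding of $E_k$ into the genus-zero surface $Q$ that avoids a countable set $T$ in the interior, and then proves this reduced statement in two independent ways, neither of which uses Baire category.  The first is entirely elementary and specific to the star: embed $Q$ concretely in the plane with round boundary circles and the $p_i$ at prescribed locations, write down an explicit one-parameter family $\{g_t\}_{t\in[-1/8,\,1/8]}$ of straight-line embeddings of $E_k$ whose images are pairwise disjoint away from the tips, and observe that only countably many parameters $t$ can yield an image meeting $T$.  The second is a general relative embedding theorem: any $(n{-}1)$--dimensional compactum in a compact $n$--manifold can be pushed off a countable interior set rel boundary, proved by quoting Engelking's result that a subset of $\R^n$ with empty interior admits a \emph{bounded} re-embedding avoiding any countable set, together with a radial identification of $\interior D^n$ with $\R^n$ (as in Hatcher's exposition of the torus trick) so that boundedness makes the move extend by the identity over $\partial D^n$; one then patches over a finite cover of the manifold by discs.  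Your route---enlarging to the complete space $\mathcal{C}$ of all continuous maps and showing that embeddings form a dense $G_\delta$ there---is a legitimate third option that stays closest in spirit to the original flawed proof; the price is that the density of $\mathcal{B}\cap\mathcal{E}$ absorbs essentially all of the topological work, whereas the paper's cardinality argument sidesteps both Baire category and dimension theory by exploiting the particularly simple shape of $E_k$.
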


The result claimed in this proposition is correct, but the proof given in \cite{HHS} contains an error.
The purpose of this erratum is to explain the nature of this error and how to correct it.

As explained in \cite[Prop.~2.3]{HHS}, in order to prove Proposition~\ref{prop:StarInSierpinski}, it suffices to prove the following.

\begin{prop}
\label{prop:StarAvoidsDenseSet}
Let $Q$ be a compact surface of genus zero with boundary circles $P_1,\dots,P_k$, and fix points $p_i \in P_i$.  Let $T$ be a countable subset of the interior of $Q$.
Then there exists a topological embedding $h\colon E_k \hookrightarrow Q\setminus T$ such that $h\of e_i(1)=p_i$ for each $i$ and such that the image of $E_k$ intersects the boundary $\boundary Q$ precisely in the given points $p_1,\dots,p_k$.
\end{prop}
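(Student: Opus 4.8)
The plan is to construct the embedding of $E_k$ one edge at a time, pushing each newly built arc off the countable set $T$ by a general-position argument in the surface $Q$. Since $Q$ is a compact genus-zero surface with boundary circles $P_1,\dots,P_k$, I can fix a planar model: realize $Q$ as a closed disc in $\R^2$ with $k-1$ open subdiscs removed (one of the boundary circles playing the role of the outer circle), equipped with a piecewise-linear structure. First I would choose an arc $\alpha_1$ from $p_1$ to some interior point $c$ that meets $\boundary Q$ only at $p_1$; this is possible because $Q$ is connected and $p_1$ lies on a boundary circle. Then inductively, having built a tree $\alpha_1 \cup \dots \cup \alpha_{j-1}$ (all sharing the endpoint $c$) that meets $\boundary Q$ only in $p_1,\dots,p_{j-1}$, I connect $c$ to $p_j$ by an arc $\alpha_j$ in $Q$ whose interior is disjoint from $\boundary Q$ and from the previously constructed arcs: this uses only that $Q\setminus(\boundary Q \cup \alpha_1\cup\dots\cup\alpha_{j-1} \setminus\{c\})$ is path-connected, which holds because removing a finite tree from the interior of a surface does not disconnect it.

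The key remaining point is to arrange that the resulting star avoids the countable set $T$. For this I would use the standard fact that in a surface, a fixed arc (or a finite tree) can be perturbed by an arbitrarily small ambient isotopy, supported away from its endpoints, so that it misses any prescribed countable set. Concretely, in the PL model, a PL arc $\alpha_j$ traverses finitely many $2$-cells; inside each such $2$-cell the arc is a finite union of line segments, and the points of $T$ lying on those segments form a countable (hence meager) set, so a small transverse wiggle removes them while keeping the arc disjoint from $\boundary Q$ and from the rest of the tree. Performing these perturbations for $j=1,\dots,k$ in turn, and shrinking the size at each stage so that no earlier arc is dragged back onto $T$, yields a finite tree $T$-avoiding in $Q$ with exactly the endpoints $p_1,\dots,p_k$ on $\boundary Q$. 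Since this tree is abstractly homeomorphic to $E_k$ via a homeomorphism carrying the $i$-th extremal vertex to $p_i$, we obtain the desired embedding $h\colon E_k\hookrightarrow Q\setminus T$ with $h\of e_i(1)=p_i$ and $h(E_k)\cap\boundary Q=\{p_1,\dots,p_k\}$.

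An alternative, cleaner packaging of the same idea — and the one I would probably write down — is a Baire category argument on the function space, mirroring the structure already used in the excerpt but now carried out honestly. Let $\sE$ be the space of topological embeddings $\iota\colon E_k\hookrightarrow Q$ with $\iota\of e_i(1)=p_i$ and $\iota(E_k)\cap\boundary Q=\{p_1,\dots,p_k\}$, metrized by the sup-metric; one first checks $\sE$ is nonempty (by the explicit construction above) and that it is a complete metric space, or at least a $G_\delta$ in a complete space. For each $t\in T$, the set $U_t$ of embeddings whose image avoids $t$ is open (by compactness of $E_k$) and dense: given $\iota\in\sE$ with $t\in\iota(E_k)$, pick a small ball $B(t,\epsilon)$ in the interior of $Q$; since $\iota(E_k)$ is $1$-dimensional it cannot contain the $2$-disc $B(t,\epsilon)$, so there is a point $q\in B(t,\epsilon)\setminus\iota(E_k)$, and an ambient isotopy of $Q$ supported in $B(t,\epsilon)$ carrying $q$ to $t$ produces an element of $\sE\cap U_t$ within $2\epsilon$ of $\iota$. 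By the Baire Category Theorem $\bigcap_{t\in T} U_t$ is nonempty, and any member of it is the desired embedding. The main obstacle, and the place where the original proof went wrong, is the nonemptiness and the density step: one must genuinely produce embeddings of $E_k$ in $Q$ meeting $\boundary Q$ in exactly the prescribed points $p_i$ (so that $\sE\neq\emptyset$), and the isotopy used in the density argument must be chosen to fix $\boundary Q$ and to move only within the interior, so that the perturbed map still lies in $\sE$ rather than merely in the space of embeddings into $Q$. Handling the endpoints $p_i\in\boundary Q$ carefully throughout — both in the initial construction and in every perturbation — is the delicate part; everything else is routine general position in a surface.
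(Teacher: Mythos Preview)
Your ``cleaner packaging'' via Baire category is exactly the argument the erratum exists to retract. The paper states that $\mathcal{E}$ with the sup-metric is \emph{not} complete, so the Baire Category Theorem does not apply as written; this---not nonemptiness or density---is the gap the authors identify. You do hedge with ``or at least a $G_\delta$ in a complete space,'' which would indeed salvage the argument, but you neither verify it nor seem to recognize that this is the crux: you instead locate the difficulty in the density and nonemptiness steps, which were never in question. As written, then, your second approach reproduces the flawed proof together with a misdiagnosis of its flaw.

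Your first approach is closer to what the paper actually does, though the paper's version is sharper and avoids the loose step in yours. Rather than building the star edge by edge and then perturbing---with the bookkeeping you flag about earlier arcs being ``dragged back onto $T$,'' and with the muddled claim that countability of $T\cap\alpha_j$ is what makes a wiggle succeed---the paper fixes an explicit planar model of $Q$ and writes down a one-parameter family $\{g_t\}_{t\in[-1/8,\,1/8]}$ of straight-line embeddings of $E_k$ that are pairwise disjoint away from the endpoints $p_i$. Since $T$ is countable, all but countably many $g_t$ miss $T$, and any such $g_{t_0}$ is the desired embedding; this single cardinality step replaces your iterative perturbation entirely. The paper also records a second, more general proof: a relative \Sierpinski\ embedding theorem, obtained from Engelking's bounded-perturbation result by radially identifying the open disc with $\R^n$ (in the spirit of Hatcher's exposition of the torus trick), which pushes any $(n{-}1)$--dimensional subset of a compact $n$--manifold off a countable interior set while fixing it on the boundary.
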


The proof of Proposition~\ref{prop:StarAvoidsDenseSet} given in \cite{HHS} involves applying the Baire Category Theorem to the space of embeddings $\mathcal{E}$ of $E_k$ into $Q$. Unfortunately, this proof is incorrect since the space $\mathcal{E}$ is not complete.  Thus one may not apply the Baire Category Theorem in this context.

\subsection{Correcting Proposition~\ref{prop:StarAvoidsDenseSet} }

This section contains two different proofs of Proposition~\ref{prop:StarAvoidsDenseSet}.
The first proof uses a simple cardinality argument.
This proof applies only to star graphs, but is completely elementary and self-contained.

The second proof is much more general; it extends a proof of \Sierpinski's embedding theorem for $1$--dimensional planar sets to give a relative embedding theorem.
The extension of \Sierpinski's result from the original setting to the relative case takes inspiration from Hatcher's exposition of Kirby's torus trick in \cite{Hatcher_TorusTrick}.

In the first proof of Proposition~\ref{prop:StarAvoidsDenseSet}, we focus on just the special case of $k=4$. This case is all that is required for the main results of \cite{HHS}. The general case follows by essentially the same reasoning.

\begin{proof}[Proof of Proposition 
\ref{prop:StarAvoidsDenseSet} ]
Let $Q$ be a compact surface of genus zero with boundary components $P_1, P_2,P_3, P_4$, and let $T$ be a countable dense set in the interior of $Q$. 
Choose an embedding $f$ of $Q$ in the Euclidean plane such that $f(Q) \subset \mathbb D^2$, the image $f(P_4)$ is the boundary circle $S^1$, and the peripheral circles $P_i$ (for $i=1,2,3$) are mapped onto the circles with radius $\frac{1}{4}$ and centers at $(-\frac{1}{2},0)$ and $(0, \pm \frac{1}{2})$. We also choose $f$ so that for $i=1,2,3$, the point $f(p_i)$ is the point  on the circle $f(P_i)$ closest to $(0,0)$ and so that $f(p_4)=(1,0)$.  For the sake of simplicity, we identify $Q$ with its image $f(Q)$ in the plane.

We wish to find an embedding of $E_4$ in $Q \setminus T \subseteq D^2$ that maps $e_i(1)$ to $p_i$ for each $i\in\{1,2,3,4\}$. 
Consider the uncountable family of embeddings $g_t\colon E_4 \to Q$, for $t \in [-\frac{1}{8}, \frac{1}{8}]$, defined as follows. For each $i\in \{1,2,3,4\}$, let $g_t(e_i)$ be the straight line segment from the point $g_t\bigl(e_i(0)\bigr)=(t,-t)$ to the point $g_t\bigl(e_i(1)\bigr)=p_i$. 
Since $T$ is countable and the sets $g_t(E_4)$ are pairwise disjoint except at the endpoints $p_i$, there are at most countably many values of $t$ such that the image of $g_t$ intersects $T$. Choose $t_0$ such that $g_{t_0}(E_k)$ is disjoint from $T$. Then $g_{t_0}$ is the desired embedding $E_4 \to Q \setminus T$.

\end{proof}

The proof above depends on the existence of a $1$--parameter family of pairwise disjoint embeddings of $E_k$ in $Q$.  As such the proof does not appear to easily generalize to embeddings of other planar graphs.  Below we discuss a different, more elaborate proof of Proposition~\ref{prop:StarAvoidsDenseSet} that holds for embeddings of a much broader family of $1$--dimensional spaces.

In the rest of this erratum, the \emph{dimension} of a normal topological space $X$ is its covering dimension, \emph{i.e.}, the supremum of all integers $n$ such that every finite open cover of $X$ admits a finite open refinement of order at most $n$.
(See Engelking \cite{Engelking_Dimensions} for details.)

\begin{prop}
\label{prop:PushOffDisc}
Let $A$ be any topological space of dimension at most $n-1$ that embeds in the closed disc $D^n$.  Let $T$ be a countable dense set in the interior of $D^n$.
Then for any embedding
  $f\colon A \to D^n$
there exists an embedding
  $g\colon A \to D^n \setminus T$
such that $f^{-1}(S^{n-1})=g^{-1}(S^{n-1})$ and $f$ and $g$ are equal on the preimage of $S^{n-1}$.
\end{prop}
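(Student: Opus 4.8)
The plan is to prove Proposition~\ref{prop:PushOffDisc} by fixing one embedding $f\colon A\hookrightarrow D^n$ (which exists by hypothesis) and then \emph{post-composing} it with a homeomorphism of $D^n$ that fixes $S^{n-1}$ pointwise and drags $f(A)$ off every point of the countable set $T$. The crucial structural point --- and precisely the repair of the gap in \cite{HHS}, where the Baire Category Theorem was applied to the incomplete space of embeddings --- is that one should run the argument inside the homeomorphism group rather than the space of maps. Let $\mathcal H$ be the group of homeomorphisms $\phi\colon D^n\to D^n$ with $\phi|_{S^{n-1}}=\mathrm{id}$, equipped with the two-sided uniform metric $\hat d(\phi,\psi)=\sup_x\lVert\phi(x)-\psi(x)\rVert+\sup_x\lVert\phi^{-1}(x)-\psi^{-1}(x)\rVert$. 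First I would record that $\mathcal H$ is completely metrizable, hence a Baire space: the assignment $\phi\mapsto(\phi,\phi^{-1})$ identifies $\mathcal H$ with a closed subset of the complete space $C(D^n,D^n)\times C(D^n,D^n)$, since uniform limits $\phi=\lim\phi_k$, $\psi=\lim\phi_k^{-1}$ satisfy $\phi\circ\psi=\psi\circ\phi=\mathrm{id}$ and $\phi|_{S^{n-1}}=\mathrm{id}$. The payoff is that for \emph{every} $\phi\in\mathcal H$ the map $g:=\phi\circ f$ is automatically an embedding, and because $\phi$ fixes $S^{n-1}$ pointwise --- hence preserves $S^{n-1}$ and $\mathrm{int}\,D^n$ --- it satisfies $g^{-1}(S^{n-1})=f^{-1}(S^{n-1})$ with $g=f$ there. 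Thus the only remaining requirement is that $g(A)$ avoid $T$.

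For each $t\in T$ put $\mathcal V_t=\{\phi\in\mathcal H : \phi^{-1}(t)\notin f(A)\}=\{\phi\in\mathcal H : t\notin\phi(f(A))\}$. The heart of the proof is to show that each $\mathcal V_t$ is a dense $G_\delta$ in $\mathcal H$; the Baire Category Theorem then yields some $\phi\in\bigcap_{t\in T}\mathcal V_t$, and $g=\phi\circ f$ is the desired embedding $A\hookrightarrow D^n\setminus T$. That $\mathcal V_t$ is $G_\delta$ follows because $\phi\mapsto\phi^{-1}(t)$ is continuous on $\mathcal H$ (inversion is even $\hat d$-isometric) and $D^n\setminus f(A)$ is a $G_\delta$ subset of $D^n$ whenever $f(A)$ is $F_\sigma$ --- which holds in the situation of interest, since $A$, and in particular the star $E_k$, is compact. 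That $\mathcal V_t$ is dense is the one place genuine geometry enters. Given $\phi_0\in\mathcal H$ and $\varepsilon>0$, set $a=\phi_0^{-1}(t)$, an interior point of $D^n$. Since $\dim f(A)\le n-1<n=\dim(\mathrm{int}\,D^n)$, monotonicity of covering dimension (\cite{Engelking_Dimensions}) shows $f(A)$ has empty interior, so there is a point $a'$ arbitrarily close to $a$ with $a'\notin f(A)$. Choosing a small closed ball $\bar B\subset\mathrm{int}\,D^n$ around $a$ containing $a'$, together with a homeomorphism $\eta$ of $D^n$ supported in $\bar B$ with $\eta(a')=a$ --- a standard ``finger move'', available because $\mathrm{Homeo}(D^n\ \mathrm{rel}\ \partial D^n)$ acts on $\mathrm{int}\,D^n$ by homeomorphisms of arbitrarily small support and displacement --- the element $\phi:=\phi_0\circ\eta$ lies in $\mathcal H$ and satisfies $\phi^{-1}(t)=\eta^{-1}(a)=a'\notin f(A)$, so $\phi\in\mathcal V_t$; moreover $\hat d(\phi,\phi_0)\to0$ as $\bar B$ shrinks, by uniform continuity of $\phi_0$ and $\phi_0^{-1}$. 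Hence $\mathcal V_t$ is dense.

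I expect the main obstacle to be conceptual rather than computational: one must commit to a genuinely complete ambient space --- the incompleteness of the embedding space is exactly what broke the original argument --- and then keep every step inside $\mathcal H$, exploiting that post-composition by a homeomorphism can never destroy the embedding property or the boundary condition. Once $\mathcal H$ is in hand the verifications are routine, the only fiddly one being the small-support control of the finger move $\eta$; I would either cite homogeneity of the ball rel boundary or write out the elementary radial model in $\bar B$. A secondary point concerns the hypothesis level: the argument above proves the proposition whenever $f(A)$ is $F_\sigma$ in $D^n$ (in particular whenever $A$ is compact or $\sigma$-compact), which is all that Propositions~\ref{prop:StarAvoidsDenseSet} and~\ref{prop:StarInSierpinski} require; for a general separable metric $A$ of dimension $\le n-1$ one needs a further localization to see that $\mathcal V_t$ remains comeager, which I would add as a remark --- in the spirit of Hatcher's treatment of the torus trick \cite{Hatcher_TorusTrick} --- but which is unnecessary for the applications in \cite{HHS}.
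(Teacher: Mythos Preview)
Your argument is correct (for compact $A$, as you carefully flag) but takes a genuinely different route from the paper's. The paper does not run any Baire argument at all; instead it quotes Engelking's embedding theorem \cite[Thm.~1.8.9]{Engelking_Dimensions}---for any $C\subset\R^n$ with empty interior and any countable $T\subset\R^n$ there is an embedding $h\colon C\to\R^n\setminus T$ with \emph{bounded} displacement---and then applies it after radially reparametrizing $\interior(D^n)\cong\R^n$ as in Hatcher's torus-trick exposition. Boundedness of the displacement in the $\R^n$ coordinates forces $h$ to extend continuously by the identity on $S^{n-1}\cap f(A)$, yielding $g=\bar h\circ f$.

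The trade-offs are clear. Your approach is more self-contained---it needs only the empty-interior fact and the finger move, not the Engelking result as a black box---and it directly repairs the original gap by moving from the incomplete space of embeddings to the complete group $\mathcal H=\mathrm{Homeo}(D^n\ \mathrm{rel}\ S^{n-1})$, which is a conceptually satisfying fix. On the other hand, the paper's proof handles arbitrary $A$ of dimension $\le n-1$ uniformly, since Engelking's theorem needs only empty interior, whereas your $G_\delta$ step requires $f(A)$ to be $F_\sigma$; you note this honestly and correctly observe it is harmless for the applications. A minor further difference: your $g$ is obtained by post-composing with an ambient homeomorphism of $D^n$, while the paper's $\bar h$ is only an embedding of $f(A)$, not of all of $D^n$---so your conclusion is formally a bit stronger in the compact case.
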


\begin{proof}
In his text on dimension theory, Engelking gives a proof of \Sierpinski's embedding theorem that depends on the following key result (see \cite[Thm.~1.8.9]{Engelking_Dimensions}). For each subset $C$ of $\R^n$ with empty interior and each countable subset $T$ of $\R^n$, there exists an embedding $h\colon C \to \R^n$ such that $h(C)$ is disjoint from $T$ and the map $h$ is bounded in the sense that the Euclidean distance $d\bigl( h(c),c \bigr)$ is less than a fixed constant for all $c \in C$.

Let $f\colon A \to D^n$ be an embedding of a space of dimension at most $n-1$ in the closed disc of radius $1$.
Since $f(A)$ does not contain an $n$--dimensional disc, the image of $A$ has empty interior in $D^n$.
Let $C$ be the intersection of $f(A)$ with the interior of $D^n$, and let $T$ be any countable set in the interior of $D^n$.
As in \cite{Hatcher_TorusTrick}, we identify the interior of $D^n$ with $\R^n$ via a radial reparametrization.
Then the argument from the previous paragraph produces an embedding of $C$ into the interior of $D^n$ that misses the countable set $T$. Due to the boundedness condition, this embedding extends via the identity on $\boundary D^n$ to an embedding $\overline{h}\colon f(A) \to D^n$.
The desired embedding $g \colon A \to D^n\setminus T$ is given by the composition $g=\overline{h}\of f$.
\end{proof}

The argument above applies to an embedding in a disc relative to its boundary.  Now we show how to modify an embedding of $A$ in a compact manifold to avoid a countable set $T$, while not moving the part of $A$ that lies in the boundary of the manifold.
The following result gives an alternative proof of Proposition~\ref{prop:StarAvoidsDenseSet}.

\begin{thm}
\label{thm:PushOffSurface}
If $A$ is a space of dimension at most $n-1$ embedded in a compact $n$--manifold $M$ with boundary, and $T$ is a countable set in the interior of $M$, then $A$ can be moved to avoid the countable set, while keeping it fixed on the boundary.
\end{thm}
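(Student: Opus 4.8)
The plan is to globalize Proposition~\ref{prop:PushOffDisc} by modifying the embedding one coordinate chart at a time. Fix an embedding $f\colon A\to M$; we want an embedding $g\colon A\to M$ with $g(A)\cap T=\emptyset$ that agrees with $f$ on $f^{-1}(\boundary M)$. Since a compact manifold is metrizable, so is $A$, hence every closed subspace of $A$ has covering dimension at most $\dim A\le n-1$; this is what will let us invoke Proposition~\ref{prop:PushOffDisc} on preimages of coordinate balls.

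The first real step is to fix a convenient finite atlas. Cover the compact manifold $M$ by finitely many open sets $V_1,\dots,V_r$ with compact closures, where each $\bar V_i$ is a topological $n$--ball via a homeomorphism $\phi_i\colon\bar V_i\to D^n$. Take the chart around an interior point to be an ordinary coordinate ball disjoint from $\boundary M$, and the chart around a boundary point to be a half-ball chart arranged so that $\bar V_i\cap\boundary M$ is carried into the boundary sphere $\phi_i(\boundary\bar V_i)=S^{n-1}$. Concretely, near a boundary point use a manifold chart onto $\R^n_+$ and let $\bar V_i$ be the preimage of a small closed half-ball; the flat face of such a half-ball lies in the topological boundary sphere of the half-ball regarded as a topological $n$--ball. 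Finally, since $T$ is countable and we may shrink the defining radii slightly without losing the covering property, we also arrange that $T\cap\boundary\bar V_i=\emptyset$, so that $T\cap\bar V_i$ lies in the interior of the ball $\bar V_i$.

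Next I would induct on $i=0,1,\dots,r$, constructing embeddings $f=f_0,f_1,\dots,f_r\colon A\to M$ such that $f_i$ agrees with $f$ on $f^{-1}(\boundary M)$, such that $f_i^{-1}(\boundary M)=f^{-1}(\boundary M)$, and such that $f_i(A)\cap\bigl(T\cap(V_1\cup\cdots\cup V_i)\bigr)=\emptyset$. Given $f_{i-1}$, apply Proposition~\ref{prop:PushOffDisc} to the restricted embedding $f_{i-1}\colon f_{i-1}^{-1}(\bar V_i)\to\bar V_i\cong D^n$ (first enlarging $T\cap\bar V_i$ to a countable dense subset of $\interior\bar V_i$ so that the hypotheses are met). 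This yields an embedding $g_i\colon f_{i-1}^{-1}(\bar V_i)\to\bar V_i$ that avoids $T\cap\bar V_i$, equals $f_{i-1}$ on $f_{i-1}^{-1}(\boundary\bar V_i)$, and satisfies $g_i^{-1}(\boundary\bar V_i)=f_{i-1}^{-1}(\boundary\bar V_i)$. Define $f_i$ to be $f_{i-1}$ on the closed set $f_{i-1}^{-1}(M\setminus V_i)$ and $g_i$ on the closed set $f_{i-1}^{-1}(\bar V_i)$; the two formulas agree on the overlap $f_{i-1}^{-1}(\boundary\bar V_i)$, so $f_i$ is continuous by the pasting lemma. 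It is injective, because $g_i$ and $f_{i-1}$ are each injective and the images $g_i\bigl(f_{i-1}^{-1}(\bar V_i)\bigr)\subseteq\bar V_i$ and $f_{i-1}\bigl(f_{i-1}^{-1}(M\setminus\bar V_i)\bigr)\subseteq M\setminus\bar V_i$ are disjoint; being a continuous injection of a compact space into a Hausdorff space, $f_i$ is an embedding. Because $\bar V_i\cap\boundary M\subseteq\boundary\bar V_i$ and $g_i^{-1}(\boundary\bar V_i)=f_{i-1}^{-1}(\boundary\bar V_i)$, the modification does not move $f_{i-1}^{-1}(\boundary M)$ and preserves its preimage, which gives the boundary conditions for $f_i$; and since $g_i$ avoids all of $T\cap\bar V_i$ while $f_i=f_{i-1}$ carries $f_{i-1}^{-1}(M\setminus V_i)$ into $M\setminus V_i$, the avoidance condition propagates. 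After stage $r$ the sets $V_i$ cover $M$, so $f_r(A)\cap T=\emptyset$, and $g=f_r$ is the desired embedding.

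The one step that needs genuine care is the choice of atlas in the second paragraph, namely arranging each boundary chart so that $\bar V_i\cap\boundary M$ lies in the topological boundary sphere of the ball $\bar V_i$. This is exactly the feature that lets the relative push-off of Proposition~\ref{prop:PushOffDisc}, which only keeps $S^{n-1}$ fixed, be applied chart by chart without ever disturbing the part of $A$ lying in $\boundary M$. Everything else is the pasting lemma together with routine bookkeeping to check that injectivity, the boundary condition, and the avoidance condition all survive each gluing.
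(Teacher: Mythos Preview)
Your proof is correct and takes essentially the same approach as the paper's: cover $M$ by finitely many closed $n$--balls and apply Proposition~\ref{prop:PushOffDisc} one ball at a time, gluing the results. The paper's proof is only three sentences long and suppresses all the bookkeeping (the choice of boundary charts with $\bar V_i\cap\partial M\subseteq\partial\bar V_i$, the arrangement $T\cap\partial\bar V_i=\emptyset$, the enlargement of $T\cap\bar V_i$ to a dense set, and the verification that injectivity, the boundary condition, and the avoidance condition all propagate). Your version is a fully fleshed-out form of the same argument.

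One small point: you justify that $f_i$ is an embedding by saying it is a continuous injection of a compact space into a Hausdorff space, but the statement does not assume $A$ itself is compact, only that $A$ is embedded in the compact manifold $M$. This is easily repaired. Rather than gluing at the level of $A$, regard the modification as a map $H_i\colon f_{i-1}(A)\to M$ equal to $\bar h_i$ on $f_{i-1}(A)\cap\bar V_i$ and the identity elsewhere, and check directly---using the pasting lemma on both $H_i$ and its set-theoretic inverse, together with the preimage condition from Proposition~\ref{prop:PushOffDisc}---that $H_i$ is a homeomorphism onto its image; then $f_i=H_i\circ f_{i-1}$ is a composition of embeddings. This is precisely what the paper means by ``a move that equals the identity outside of that disc.''
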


\begin{proof}
Cover the compact manifold $M$ with a finite collection of closed $n$--discs $\{D_i\}$ whose interiors cover the interior of $M$.  Then apply the result of Proposition~\ref{prop:PushOffDisc} to each of the discs $D_i$ to push $A$ off of the part of $T$ contained in the interior of that disc by a move that equals the identity outside of that disc.
Since the cover is finite, the composition of this sequence of moves gives an embedding with the desired properties.
\end{proof}

\bibliographystyle{alpha}
\bibliography{chruska}{}

\end{document}